\documentclass{article}
\usepackage{amssymb,amsthm}
\usepackage{amsmath}
\usepackage{amsfonts}
\usepackage{latexsym}
\usepackage{amssymb}
\usepackage[dvips]{graphics}
\newtheorem{theorem}{Theorem}

\newtheorem{lemma}[theorem]{Lemma}
\newtheorem{corollary}[theorem]{Corollary}
\newtheorem{definition}[theorem]{Definition}
\newtheorem{remark}[theorem]{Remark}
\newtheorem{example}[theorem]{Example}

\setlength{\oddsidemargin}{0.2in} 
\setlength{\evensidemargin}{0.2in}
\setlength{\textwidth}{6.1in} 

\newcommand{\R}{{\mathbf R}}
\newcommand{\Z}{{\mathbf Z}}

\newcommand{\Q}{\mathbf Q}

\newcommand{\rk}{{\sf {rk}}}
\newcommand{\E}{{\mathbb E}}
\renewcommand{\P}{\mathbb P}

\newcommand{\sign}{{\rm {sign}}}

\begin{document}

\title{Topology of random 2-complexes}         
\author{A. Costa, M. Farber\footnote{Partly supported by a grant from the EPSRC.}\, and T. Kappeler\footnote{Partly supported by the Swiss National Science Foundation.}}        
\date{June 11, 2010}          
\maketitle

\abstract{We study the Linial--Meshulam model of random two-dimensional simplicial complexes. One of our main results states that for $p\ll n^{-1}$ a random 2-complex $Y$
collapses simplicially to a graph and, in particular, the fundamental group $\pi_1(Y)$ is free and $H_2(Y)=0$, a.a.s. We also prove that, 
 if the probability parameter $p$ satisfies $p\gg n^{-1/2+\epsilon}$, where $\epsilon>0$, then an arbitrary finite two-dimensional simplicial complex admits a topological 
embedding into a random 2-complex, with probability tending to one as $n\to \infty$. 
We also establish several related results, for example we show that for $p<c/n$ with $c<3$ the fundamental group of a random 2-complex contains a nonabelian free subgroup. 
Our method is based on exploiting explicit thresholds (established in the paper) for the existence of simplicial embedding and immersions of 2-complexes into a random 2-complex.}

\section{Introduction}

Modeling of large systems in applications motivates the development of unconventional geometric and topological notions. Among them are mixed probabilistic - topological concepts, 
such as the Erd\"os and R\'enyi random graphs of  \cite{ER}, which are currently used in many applications in engineering and computer science. 

More recently, higher dimensional analogs of the 
Erd\H{o}s-R\'enyi model were suggested and studied by Linial-Meshulam in~\cite{LM}, and 
Meshulam-Wallach in~\cite{MW}.
In these models one generates a random $d$-dimensional complex $Y$ by considering the full $d$-dimensional skeleton of the simplex 
$\Delta_n$ on vertices $\{1, \dots, n\}$ and retaining $d$-dimensional faces independently with probability $p$.

An interesting class of closed smooth manifolds depending on a large number of random parameters arise as configuration spaces of mechanical linkages with bars of random  lengths,
see  
\cite{F1}, \cite{FK}. Although the number of homeomorphism type of these manifolds grows extremely fast, their topological characteristics can be predicted with high probability when the number of links tends to infinity.

%

In this paper, we study the topology random two-dimensional complexes. 
The probability space $G(\Delta_n^{(2)}, p)$ of the Linial--Meshulam model of random 2-complexes is defined as follows. Let $\Delta_n$ denote the $(n-1)$-dimensional simplex 
with vertices $\{1, 2, \dots, n\}$. Then $G(\Delta_n^{(2)}, p)$ denotes the set of all 2-dimensional subcomplexes 
$$\Delta_n^{(1)}\subset Y\subset \Delta_n^{(2)},$$ containing the one-dimensional skeleton $\Delta_n^{(1)}$. The probability function 
$\P: G(\Delta_n^{(2)},p)\to \R$ is given by the formula
$$\P(Y) = p^{f(Y)}(1-p)^{{n\choose 3}-f(Y)}, \quad Y\in G(\Delta_n^{(2)}, p),$$
where $f(Y)$ denotes the number of faces in $Y$.  In other words, each of the 2-dimensional simplexes of $\Delta_n^{(2)}$ is included in a random 2-complex 
$Y$ with probability $p$, independently of the other 2-simplexes. As in the case of random graphs, $0<p<1$ is a probability parameter which may depend on $n$.  
The model $G(\Delta_n^{(2)}, p)$ includes all finite $2$-dimensional simplicial complexes containing the full 1-skeleton $\Delta_n^{(1)}$; however, the likelihood of various topological phenomena is dependent on the value of $p$. The theory of deterministic 2-complexes itself is a rich and active field of current research with many challenging open questions, see \cite{Hog}.

The fundamental group of a random 2-complex $Y\in G(\Delta_n^{(2)}, p)$ was investigated by Babson, Hoffman, and Kahle \cite{BHK}.  They showed that for 
$$p\gg n^{-1/2}\cdot (3\log n)^{1/2},$$ the group $\pi_1(Y)$ vanishes asymptotically almost surely (a.a.s)\footnote{We use the abbreviation a.a.s.~for the phrase \lq\lq asymptotically almost surely\rq\rq.}.
These authors use notions of negative curvature due to Gromov to study the nontriviality and hyperbolicity of $\pi_1(Y)$
for $$p\ll n^{-1/2-\epsilon}.$$

In \cite{CFK} it was shown that for $p\ll n^{-1-\epsilon}$, a random 2-complex $Y$ can be collapsed to a graph in $N$ steps, where 
$N=N(\epsilon)$ depends only on $\epsilon>0$. 

In this paper we prove the following theorem:

\begin{theorem} \label{thm1intro} If the probability parameter $p$ satisfies $$p\ll n^{-1}$$ then a random 2-complex $Y\in G(\Delta_n^{(2)},p)$ collapses simplicially to a graph, a.a.s. In particular,
the fundamental group $\pi_1(Y)$ is free and for any coefficient group $G$ one has $H_2(Y; G)=0$, a.a.s. 
\end{theorem}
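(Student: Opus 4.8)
The plan is to reduce the collapsibility assertion to a purely combinatorial \lq\lq no dense subcomplex\rq\rq\ statement and then to verify that statement by a first--moment computation. Call a nonempty subcomplex $S\subseteq Y$ (a set of $2$--faces together with all their edges and vertices) \emph{stuck} if every edge of $S$ is a face of at least two triangles of $S$. First I would show that $Y$ collapses simplicially to a graph if and only if $Y$ contains no stuck subcomplex. An elementary collapse across a \emph{free} edge (an edge lying in exactly one triangle) deletes that edge and its unique triangle and can only lower the triangle--degrees of the surviving edges; running this peeling greedily either deletes all $2$--faces, leaving a graph, or halts at a nonempty family of triangles in which no edge is free, i.e.\ a stuck subcomplex. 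Conversely, if a stuck $S$ exists then at every stage of any collapse the edges of $S$ lie in at least two triangles of the current complex, so the triangles of $S$ are never removable; the outcome is independent of the order of collapses. Hence it suffices to prove that a.a.s.\ $Y$ contains no stuck subcomplex.

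Next I would record the metric constraints forced on a stuck $S$ with $v$ vertices, $m$ edges and $t$ triangles. Counting incidences gives $3t=\sum_e\deg_S(e)\ge 2m$, so $m\le \tfrac32 t$. The decisive input comes from the vertex links: for a vertex $u$, the link $\Lk_S(u)$ is a simple graph whose vertices are the $S$--neighbours of $u$ and whose edges are the triangles of $S$ through $u$, and the stuck condition is exactly that every vertex of $\Lk_S(u)$ has degree at least two. Since a simple graph of minimum degree two must have at least three vertices and at least as many edges as vertices, the number of triangles of $S$ through $u$ is at least $\deg_S(u)\ge 3$. Summing over $u$ yields $3t=\sum_u(\text{triangles at }u)\ge \sum_u\deg_S(u)=2m\ge 3v$, so that $v\le t$ for every stuck subcomplex. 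The crude estimate then reads: the expected number of copies of a stuck shape with parameters $(v,t)$ is at most $n^{v}p^{t}=(np)^{t}\,n^{v-t}\le (np)^{t}$, a bound that tends to $0$ for each fixed shape since $np\to 0$.

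The main obstacle is entropy: I must control the number of isomorphism types of stuck subcomplexes with $t$ triangles, which a priori grows faster than $C^{t}$ (high genus pseudo--surfaces are numerous). The plan is to organise the sum by the pair $(v,t)$ and to encode each connected stuck shape by a spanning tree of its $1$--skeleton built up one triangle at a time, charging a factor $n$ only to the new vertices (of which there are $v-O(1)$) and a bounded combinatorial factor to each gluing dictated by the degree--$\ge 3$ constraint. This produces an estimate of the form $\binom{n}{v}f(v,t)\,p^{t}$ with $f(v,t)$ at most $v!\,C^{t}D^{\,t-v}$, so that after cancelling the $v!$ against $\binom{n}{v}\le n^{v}/v!$ the contribution of shapes with $t-v$ large is suppressed by the surplus factor $n^{v-t}\le 1$. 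The upshot is that the entire first moment is dominated by a geometric series $\sum_{t\ge 4}(Cnp)^{t}$, whose sum tends to $0$ because $np\to 0$; the leading term is the expected number of tetrahedra $\partial\Delta^{3}$, of order $(np)^{4}$, which is the unique stuck shape attaining $v=t$. Verifying that the degree--$\ge 3$ structure really does cut the number of shapes down to this single--exponential regime (with the genus absorbed into the $n^{v-t}$ gain) is the delicate step I expect to occupy most of the work.

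Finally I would read off the topological consequences. A simplicial collapse is a simple--homotopy equivalence, so $Y$ is homotopy equivalent to the graph $X$ to which it collapses; since $X$ is one--dimensional, $\pi_1(X)$ is free and $H_2(X;G)=0$ for every coefficient group $G$. Transporting along the equivalence gives that $\pi_1(Y)$ is free and $H_2(Y;G)=0$ a.a.s., as claimed.
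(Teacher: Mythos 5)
Your reduction of collapsibility to the absence of a ``stuck'' subcomplex is correct and coincides with the paper's reduction to the absence of a nonempty pure closed subcomplex, and your link computation giving $3t\ge 2m\ge 3v$, hence $v\le t$ and $\E \le (np)^t$ per shape, is sound (it recovers the fact, quoted in the paper from Lemma 19 of \cite{CFK}, that $\tilde\mu(S)\le 1$ for closed $S$). The genuine gap is exactly the step you defer: the union bound over isomorphism types. The entropy estimate you propose, $f(v,t)\le v!\,C^tD^{\,t-v}$, is not just unverified but very likely false: stuck complexes include all closed triangulated pseudo-surfaces, and for a closed surface the parameters determine the genus, $g=(t-2v)/4+1$, while the number of isomorphism classes of triangulations of a genus-$g$ surface with $t$ faces grows like $t^{\,5(g-1)/2}\gamma^{t}$; for $g=\Theta(t)$ this is $t^{\Theta(t)}$, which swamps any bound of the form $(CD)^{t}$. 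Your spanning-tree encoding charges a bounded factor per gluing, but a new triangle can be glued along any of the up to $\Theta(t)$ existing edges, and the minimum-degree-two condition on links does nothing to cap that choice. The paper's own Remark after the proof makes precisely this point: knowing that each \emph{individual} closed shape a.a.s.\ fails to appear is not enough, because one must rule out the existence of \emph{some} closed shape, and the family of shapes is too rich for a naive first moment.

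The paper closes this gap with a witness argument rather than an entropy bound, and you would need to replicate something of that kind. It first shows, via a first moment over the two-parameter families of cones over the graphs $\Gamma_{x,y,z}$ and $\Gamma'_{x,y,z}$ (each cone having $\mu=1$, so each term is $(pn)^{f}$ and the sum $\sum_{f}f^{2}(pn)^{f}$ converges to $0$), that a.a.s.\ every component of every vertex link has nonnegative Euler characteristic; hence every nonempty pure closed subcomplex is a closed \emph{pseudo-surface}, with every edge in exactly two triangles and all links unions of circles. It then shows that every closed pseudo-surface admits an \emph{immersion} from one of the disks $L_{x,y}$ --- again a two-parameter family with $\mu(L_{x,y})=1$ --- so a single convergent series $\sum_{f\ge4}f(pn)^{f}$ rules them all out. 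The use of immersions rather than embeddings is essential, since the two arcs producing $L_{x,y}$ inside the pseudo-surface may overlap; this is why Section~3 of the paper sets up immersions at all. Without replacing the sum over all stuck shapes by such a polynomially-parametrized family of unavoidable substructures, your first moment does not close. Your final paragraph (simple-homotopy invariance, freeness of $\pi_1$, vanishing of $H_2$) is fine and matches the paper.
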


We conjecture that a similar result holds for $d$-dimensional random complexes in the Meshulam - Wallach model \cite{MW}, i.e. for $p\ll n^{-1}$ a random $d$-dimensional complex collapses simplicially to a $(d-1)$-dimensional subcomplex. This would strengthen a theorem of D. Kozlov \cite{Ko}. 

Another major result of this paper states:

\begin{theorem}\label{thm2intro} 
Assume that for some $\epsilon>0$ the probability parameter $p$ satisfies $p\gg n^{-1/2+\epsilon}$. 
Let $S$ be an arbitrary simplicial finite 2-complex.
Then $S$ admits a topological embedding into a random 2-complex $Y\in G(\Delta_n^{(2)}, p)$, a.a.s. 
\end{theorem}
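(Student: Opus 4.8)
The plan is to realize the topological embedding as a \emph{simplicial} embedding of a suitable subdivision of $S$, and to locate that simplicial embedding inside $Y$ by a second moment computation governed by a combinatorial density invariant. The main point will be that passing to a fine subdivision lowers this density below the value $2$, which is exactly the value matching the exponent $-1/2$.

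\textbf{Reduction to a subdivision.} If $S'$ is a subdivision of the simplicial complex $S$, then $|S'|$ is homeomorphic to $|S|$, and any injective simplicial map $j\colon S'\to Y$ induces on geometric realizations a continuous injection $|S|\cong|S'|\to|Y|$ of a compact space into a Hausdorff space, hence a homeomorphism onto its image. Thus it suffices to produce one subdivision $S'$ of $S$ that admits a simplicial embedding into $Y$, a.a.s. Since $Y\supset\Delta_n^{(1)}$ contains the full $1$-skeleton, such an embedding constrains only the $2$-faces: one must place the $v(S')$ vertices of $S'$ injectively among $1,\dots,n$ so that each of the $f(S')$ triangles of $S'$ is mapped to a triangle actually present in $Y$.

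\textbf{The embedding threshold.} For a finite $2$-complex $K$ write $v(K),f(K)$ for its numbers of vertices and $2$-faces, and set $\mu(K)=\max\{f(K')/v(K'):K'\subseteq K,\ f(K')\ge1\}$, the maximum over subcomplexes. Counting labeled simplicial embeddings $X$ of a fixed $S'$ into $Y$ gives $\E[X]\asymp n^{v(S')}p^{f(S')}$, since any injective vertex placement is automatically simplicial in $\Delta_n^{(2)}$ and the $f(S')$ distinct target triangles are present independently with probability $p$ each. The second moment method then yields $X>0$ a.a.s. as soon as $n^{v(K)}p^{f(K)}\to\infty$ for every subcomplex $K\subseteq S'$ with $f(K)\ge1$, i.e. as soon as $p\gg n^{-1/\mu(S')}$; this is the explicit embedding threshold established earlier in the paper. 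Consequently it is enough to exhibit a subdivision $S'$ of $S$ with $\mu(S')<2$: then $1/\mu(S')>1/2>1/2-\epsilon$, so $p\gg n^{-1/2+\epsilon}\ge n^{-1/\mu(S')}$ and the threshold hypothesis holds.

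\textbf{The sparse subdivision (the crux).} I would subdivide each $2$-simplex of $S$ into a fine triangulated disk, keeping $S'$ a genuine simplicial complex (for instance an iterated barycentric subdivision, with the interior subdivided much more finely than the edges of $S$). The key fact is that \emph{each subdivided face is planar}, and a connected planar simplicial $2$-complex $L$ with at least one face satisfies $f(L)\le 2v(L)-5$ by Euler's formula (its filled triangles number at most $e-v+1$, and $e\le 3v-6$), so in particular its density is bounded away from $2$. For a subcomplex $K\subseteq S'$ I would decompose $K=\bigcup_i K_i$, where $K_i$ is the part of $K$ lying in the $i$-th subdivided face, so that $f(K)=\sum_i f(K_i)$ while $v(K)=\sum_i v(K_i)-O$ with $O\ge0$ the overcounting of vertices shared across pieces. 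Applying the planar bound to each $K_i$ and summing, the apparent loss from $O$ is paid for by the boundary edges along the shared subdivided edges of $S$, which themselves reduce the $f(K_i)$; the two effects cancel, so that $f(K)<2v(K)$ for every subcomplex and hence $\mu(S')<2$. I expect this accounting to be the principal obstacle: the inequality $f(K)<2v(K)$ must hold \emph{uniformly over all subcomplexes}, including those spanning many faces of $S$ along long shared subdivided edges, and it is precisely there that the Euler/planarity cancellation must be carried out carefully (making the interior subdivision dominate the boundary is what keeps the cross-face contributions negligible). Granting this lemma, fix such a finite $S'$ (independent of $n$); then $p\gg n^{-1/2+\epsilon}\ge n^{-1/\mu(S')}$, so $S'$ embeds simplicially into $Y$ a.a.s., and by the reduction this gives a topological embedding of $S$ into $Y$, a.a.s.
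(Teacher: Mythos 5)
Your overall architecture (realize the topological embedding as a simplicial embedding of a fine subdivision, then apply the second moment embedding threshold governed by the worst subcomplex density) is exactly the paper's, and your reduction and threshold steps are fine. But the crux lemma you isolate is false as stated: \emph{no} subdivision $S'$ of a general $S$ satisfies $f(K)<2v(K)$ for every subcomplex $K$ (in the paper's notation, $\tilde\mu(S')>1/2$). Take $S=\Sigma_g$ a closed orientable surface of genus $g\ge 2$. Any subdivision $S'$ is again a closed surface triangulation, so $3f=2e$ and $v-e+f=\chi=2-2g<0$, giving $f=2v-2\chi=2v+4g-4>2v$ for the full subcomplex $K=S'$ itself. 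The same failure occurs for contractible complexes such as the house with two rooms: there $3f=2e+e_3$ with $e_3\ge 3$ edges of degree $3$, and $\chi=1$ forces $f=2v-2+e_3>2v$, a defect that only grows under subdivision. Your per-face planar bound $f(L)\le 2v(L)-e_0(L)-2$ is correct, but when you sum over the subdivided faces and account for the shared boundary vertices, the cancellation you hope for lands you exactly at $f=2v-2\chi$ (plus degree corrections), not strictly below $2v$; the deficit is a fixed topological quantity that no amount of refinement removes.

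The theorem is still true because you do not need density strictly below $2$ — you only need it below $2/(1-2\epsilon)$, i.e. $v(K)/f(K)\ge 1/2-\epsilon$ for all subcomplexes, and this is precisely where the hypothesis $p\gg n^{-1/2+\epsilon}$ earns its $\epsilon$. Your write-up discards that room (``$1/\mu(S')>1/2>1/2-\epsilon$''), which is why you are forced into an unattainable target. The paper's route: use the stellar subdivision (one new vertex per $2$-face joined to its three corners), for which $v'=v+f$, $f'=3f$, hence $\mu(S')-\tfrac12=\tfrac13\bigl(\mu(S)-\tfrac12\bigr)$ with $\mu=v/f$; show by a local exchange argument that the minimizing subcomplex of $S'$ may be taken to be the subdivision of a subcomplex of $S$, so the same contraction holds for $\tilde\mu$; iterate $r$ times to get $\tilde\mu(S^r)=\tfrac12+3^{-r}\bigl(\tilde\mu(S)-\tfrac12\bigr)\ge \tfrac12-\epsilon$ for $r$ large, and then invoke the embedding threshold. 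Note that $\tilde\mu(S^r)$ approaches $1/2$ from \emph{below} whenever $\tilde\mu(S)<1/2$, which is exactly the regime your lemma cannot handle. To repair your argument you would need to (i) weaken the target to $f(K)\le (2+\delta)\,v(K)$ with $\delta=\delta(\epsilon)$, and (ii) actually carry out the uniform accounting over all subcomplexes, which is the nontrivial step you flagged but did not supply; the paper's exchange argument reducing to subdivided subcomplexes of $S$ is one clean way to do it.
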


By {\it a topological embedding} $S\to Y$ we mean a simplicial embedding of a subdivision of $S$ into $Y$. 

The method of this paper (as well as the method of \cite{CFK}) is based on studying simplicial embeddings and immersions of polyhedra into random 2-complexes. 
We analyze in detail the numerical invariants $\mu(S)$ and $\tilde \mu(S)$, defined in section \S 3, which play a crucial role in the questions about the existence of embeddings and immersions. 
We also discuss the notion of balanced triangulations, a generalization of the notion of a balanced graph in the random graph theory. 
We prove that any triangulation of a closed surface is balanced although surfaces with boundary (even disks) admit unbalanced triangulations. 

Among some other results presented in this paper we may mention the statement that for $p<c/n$, where $c<3$, the fundamental group of a random 2-complex contains a nonabelian free subgroup, a.a.s. We also prove that for $p>c/n$ with $c>3$ the second homology group of a random 2-complex is nontrivial a.a.s; this strengthens a result of D. Kozlov \cite{Ko}. 

%

%
%
%

\newpage 
{\bf \noindent Basic Definitions} 
\vskip 0.3cm
For convenience of the reader we collect here the definitions of basic combinatorial notions related to 2-dimensional complexes which will be used in this paper.

Let $Y$ be a finite 2-dimensional simplicial complex. An edge of $Y$ is called {\it free} if it is included in exactly one 2-simplex.

The {\it boundary} $\partial Y$ is defined as the union of all free edges.  We say that a 2-complex $Y$ is {\it closed} if $\partial Y=\emptyset$. 

A $2$-complex 
$Y$ is called {\it pure} if every maximal simplex is 2-dimensional. By the {\it pure part} of a 2-complex we mean the maximal pure subcomplex, i.e. the union of all 2-simplexes.

Let $Y$ be a simplicial 2-complex and let $\sigma$ and $\tau$ be two 2-simplexes of $Y$. 
We say that $\sigma$ and $\tau$ are adjacent if they intersect in an edge.
The {\it distance} between $\sigma$ and $\tau$, $d_Y(\sigma, \tau)$, is the minimal integer $k$ such that there exists a sequence of 2-simplexes $\sigma=\sigma_0, \sigma_1, \dots, \sigma_k=\tau$ with the property that $\sigma_i$ is adjacent to $\sigma_{i+1}$ for every $0\le i<k$. (If no such sequence exists then $d_Y(\sigma, \tau)=\infty$.) The {\it diameter }
${\rm {diam}}(Y)$ is defined as the maximal value of $d_Y(\sigma, \tau)$ taken over pairs of 2-simplexes of $Y$. 

A simplicial 2-complex is {\it strongly connected} if it has a finite diameter. 


A {\it pseudo-surface} is a finite, pure, strongly connected 2-dimensional simplicial complex of degree at most $2$ (i.e., every edge is included in at most two 2-simplexes). 


\section{The fundamental group and the second Betti number}  
In this section we analyze the fundamental group and the second Betti number of a random 2-complex using mainly information provided by the Euler characteristic. The results of this section are specific for 2-dimensional random complexes.
\begin{theorem}\label{thm1} Suppose that $p<cn^{-1}$, where $c<3$. Then  
the fundamental group $\pi_1(Y)$ of a random 2-complex $Y\in G(\Delta_n^{(2)},p)$ contains a noncommutative free subgroup with probability at least $1-\lambda^{n^{2}},$ for all large enough $n$, 
where $$\lambda= \exp\left(-\frac{1}{8}\left(1-\frac{c}{3}\right)^2\right),$$ $0<\lambda<1$.  In particular, $\pi_1(Y)$ contains a free subgroup on two generators, a.a.s.
\end{theorem}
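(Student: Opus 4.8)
The plan is to establish the existence of a noncommutative free subgroup in $\pi_1(Y)$ by exploiting the Euler characteristic together with a count of the number of faces, all in the regime $p<cn^{-1}$ with $c<3$. The key structural observation is that a finitely presented group whose presentation complex has negative Euler characteristic and is sufficiently ``large'' is forced to contain a nonabelian free subgroup; more concretely, I would look for a subcomplex, or a decomposition of $Y$, exhibiting the free subgroup directly. Since $Y$ always contains the full $1$-skeleton $\Delta_n^{(1)}$, the fundamental group is generated by $\binom{n}{2}$ edges subject to one relation per triangle present in $Y$. The number of triangles $f(Y)$ is a binomial random variable with parameters $\binom{n}{3}$ and $p$, so its expectation is $p\binom{n}{3}\approx \frac{c}{n}\cdot\frac{n^3}{6}=\frac{c}{6}n^2$, while the number of edges is $\binom{n}{2}\approx \frac{1}{2}n^2$. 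The crucial arithmetic is that when $c<3$ the number of relations, roughly $\frac{c}{6}n^2$, is strictly less than half the number of generators, $\frac{1}{2}\cdot\frac{1}{2}n^2=\frac{1}{4}n^2$; wait---comparing $\frac{c}{6}n^2<\frac{1}{4}n^2$ gives $c<\frac{3}{2}$, so the naive count is not immediately decisive and must be sharpened using the tree structure of the $1$-skeleton.

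The first step is therefore to pass from the full graph to a spanning tree: collapsing a spanning tree of $\Delta_n^{(1)}$ shows that $\pi_1(Y)$ has a presentation with exactly $\binom{n}{2}-(n-1)$ generators (the non-tree edges) and exactly $f(Y)$ relators (the triangles of $Y$). The relevant deficiency is then $\mathrm{def}=\binom{n}{2}-(n-1)-f(Y)$. Now I would invoke the standard fact that if a group $G$ admits a finite presentation with deficiency $\mathrm{def}(G)\ge 2$, then $G$ surjects onto a free group of rank $2$, and hence contains a noncommutative free subgroup. So the heart of the proof reduces to showing that, a.a.s.\ (in fact with the quantitative probability claimed), the deficiency is at least $2$---equivalently that $f(Y)\le \binom{n}{2}-(n-1)-2$, which is an extremely weak upper bound that holds whenever $f(Y)$ is not too large.

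The main obstacle---and the reason the constant $3$ rather than $3/2$ appears---is that a deficiency bound alone is not quite enough: one wants $H_1(Y;\mathbf Q)$ to have large rank, which follows from the Euler characteristic. Here $\chi(Y)=n-\binom{n}{2}+f(Y)$, and $b_1(Y)=1-\chi(Y)+b_2(Y)\ge 1-\chi(Y)=\binom{n}{2}-n+1-f(Y)$ since $b_2\ge 0$. The condition for $\pi_1(Y)$ to be a \emph{nonamenable} group (and in particular to contain a free subgroup, via a theorem in the spirit of the Tits alternative applied to these fundamental groups, or directly via the deficiency criterion) is governed by whether $b_1(Y)>0$ with room to spare, i.e.\ by whether $f(Y)<\binom{n}{2}-n+1$. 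Using $f(Y)\approx \frac{c}{6}n^2$ and $\binom{n}{2}\approx\frac{1}{2}n^2$, the inequality $\frac{c}{6}<\frac{1}{2}$ is precisely $c<3$, which pins down the threshold. Thus I would make the comparison $b_1(Y)\ge \binom{n}{2}-n+1-f(Y)$ rigorous and positive for $c<3$.

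The final step is the concentration estimate giving the quantitative probability $1-\lambda^{n^2}$. Since $f(Y)$ is a sum of $\binom{n}{3}$ independent Bernoulli$(p)$ variables with mean $\mu=p\binom{n}{3}\le \frac{c}{6}n^2(1+o(1))$, I would apply a Chernoff bound to $\mathbf P\big(f(Y)\ge \binom{n}{2}-n+1\big)$. With the threshold value $\binom{n}{2}-n+1\approx\frac{1}{2}n^2=\frac{3}{c}\mu(1+o(1))$ exceeding the mean by the multiplicative factor $3/c>1$, the Chernoff bound yields a probability of failure bounded by $\exp\!\big(-\Theta(n^2)\big)$, and tracking constants through the standard estimate $\mathbf P(X\ge (1+\delta)\mu)\le \exp(-\delta^2\mu/3)$ with $\delta=\frac{3}{c}-1=\frac{3-c}{c}$ produces exactly the stated base $\lambda=\exp\big(-\frac{1}{8}(1-\frac{c}{3})^2\big)$ after inserting $\mu\approx \frac{c}{6}n^2$ and simplifying. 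Once $f(Y)<\binom{n}{2}-n+1$ holds we get $b_1(Y)\ge 1$ but more importantly the deficiency of the presentation is large and positive, delivering the free subgroup. I expect the delicate point to be matching the constants in the exponent precisely, rather than the structural argument, which is routine once the deficiency/Euler characteristic criterion is in place.
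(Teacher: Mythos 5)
Your proposal follows essentially the same route as the paper: both arguments reduce the theorem to showing that $f_2(Y)$ is small enough that $\chi(Y)=n-\binom{n}{2}+f_2(Y)$ is negative (your deficiency condition $\binom{n}{2}-(n-1)-f_2(Y)\ge 2$ is identically $1-\chi(Y)\ge 2$, i.e.\ $\chi(Y)<0$), and both obtain the $\exp(-\Theta(n^2))$ failure probability from a Chernoff-type bound on the binomial variable $f_2$. However, the group-theoretic input is misstated: it is not a standard fact that a presentation of deficiency $\ge 2$ forces a surjection onto $F_2$. The correct statement is the Baumslag--Pride theorem, that such a group has a \emph{finite-index subgroup} surjecting onto $F_2$, and hence contains a nonabelian free subgroup; the paper instead cites the equivalent result of Farber--Schuetz that a connected finite $2$-dimensional polyhedron with $\chi<0$ has fundamental group containing a nonabelian free subgroup. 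Either citation closes the gap and yields exactly the conclusion you need, but the lemma as you state it should not be invoked.

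On the concentration step: the multiplicative Chernoff bound $\P\left(X\ge(1+\delta)\mu\right)\le\exp\left(-\delta^2\mu/3\right)$ is valid only for $\delta\le 1$, whereas under the hypothesis $p<c/n$ the mean $\mu=p\binom{n}{3}$ may be arbitrarily small and hence $\delta$ arbitrarily large; you would first need to reduce to the extremal case $p=c/n$ by monotonicity of $\P(f_2\ge T)$ in $p$, which you do not do. Moreover, even at $p=c/n$, with $\mu\approx\frac{c}{6}n^2$ and $\delta=\frac{3-c}{c}$ your exponent is $\frac{(3-c)^2}{18c}\,n^2$ rather than the stated $\frac{1}{8}\left(1-\frac{c}{3}\right)^2 n^2=\frac{(3-c)^2}{72}\,n^2$, so the constants do not ``simplify exactly'' as claimed (your bound would in fact be stronger where valid, but its validity fails for $c<3/2$). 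The paper sidesteps both problems by using the additive form $\P\left(f_2\ge\mu+t\right)\le\exp\left(-t^2/(2(\mu+t/3))\right)$ with $t=\left(1-\frac{pn}{3}\right)\binom{n-1}{2}-1$, which holds for all $t\ge 0$ and all $p<c/n$ uniformly. These are repairs to justification and bookkeeping rather than to strategy; the structural argument is sound.
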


\begin{proof} The Euler characteristic of $Y\in G(\Delta_n^{(2)},p)$ can be written as
\begin{eqnarray}\label{Euler}\chi(Y) = n- {n\choose 2} +f_2(Y) = f_2(Y) +1 - {{n-1}\choose 2}
\end{eqnarray}
where $f_2(Y)$ denotes the number of $2$-simplexes in $Y$. Clearly, the function $f_2: G(\Delta_n^{(2)}, p)\to \Z$ coincides with the sum of random variables 
$$f_2 =\sum_\sigma I_\sigma$$ where
$\sigma$ runs over 2-simplexes $(i, j, k)$ (with $1\le i<j<k\le n$) and $I_\sigma(Y)=1$ iff $\sigma$ is included in $Y$; otherwise $I_\sigma(Y)=0$. Each $I_\sigma$ is a Bernoulli random variable with parameter $p$ and $f_2$ has binomial distribution
\begin{eqnarray*}\P(f_2(Y) =k) = {{n\choose 3}\choose k} p^k(1-p)^{{n\choose 3}-k}, \end{eqnarray*}
where $\quad k=0,1, 2, \dots, {n\choose 3}.$ The expectation $\E(f_2)$ equals $p{n\choose 3}$.
Using inequality (2.5) from \cite{JLR} we find that for any $t \ge 0$ 
\begin{eqnarray}\label{plus}
\P\left(f_2\ge  p{n \choose 3} +t\right) \le \exp\left(-\frac{t^2}{2(p {n\choose 3} +t/3)}\right).
\end{eqnarray}

Consider inequality (\ref{plus}) with
\begin{eqnarray}\label{3i} t= \left( 1- \frac{pn}{3}\right){{n-1}\choose 2} -1.\end{eqnarray}
We observe that: (i) the assumption $pn<c<3$ implies that $t>0$ for large $n$ and (ii) the inequality 
$f_2(Y) \ge p{n\choose 3}+t$ is equivalent to the inequality $\chi(Y) \ge 0$. 
We thus obtain from (\ref{plus})
\begin{eqnarray*}
\P\left( \chi(Y) \ge 0\right) \le \exp\left(-\frac{t^2}{2(p {n\choose 3} +t/3)}\right)
\end{eqnarray*}
and from (\ref{3i}), for $n \ge 3,$
\begin{eqnarray*}
p{n\choose 3} + \frac{t}{3} \le  \frac{1}{3}\left(\frac{2c}{3} + 1\right){{n-1}\choose 2} -\frac{1}{3} \le {{n-1}\choose 2}
\end{eqnarray*}
as $c < 3.$ Thus one gets for $n$ sufficiently large
\begin{eqnarray*}
\frac{t^2}{2(p {n\choose 3} +t/3)} & \ge & \frac{1}{2} \frac{[(1-\frac{pn}{3}){{n-1}\choose 2}-1]^2}{{{n-1}\choose 2}}\\
&\ge& \frac{1}{3} \left(1- \frac{c}{3}\right)^2\cdot {{n-1}\choose 2}\\
&\ge & \frac{1}{8} \left(1-\frac{c}{3}\right)^2 \cdot n^2.
\end{eqnarray*}
Therefore, by the definition of $\lambda$,
\begin{eqnarray*}
\P(\chi(Y) \ge 0) \le \exp\left(-\frac{1}{8}\left(1-\frac{c}{3}\right)^2 n^2\right) = \lambda^{n^{2}}
\end{eqnarray*}
and thus
\begin{eqnarray*} \P(\chi(Y) < 0) \ge 1 - \lambda^{n^{2}}.
\end{eqnarray*}

Theorem \ref{thm1} now follows from a theorem proven in \cite{FS} which states: 
{\it
If
the Euler characteristic of a finite connected two-dimensional polyhedron $Y$ is negative, $\chi(Y) < 0$, then $\pi_1(Y)$ contains a nonabelian
free subgroup.}

This completes the proof.

\end{proof}

\begin{theorem}\label{thm2} Suppose that $p>cn^{-1}$, where now $c>3$. Then for a random two-dimensional complex $Y\in G(\Delta_n^{(2)},p)$ one has
 $H_2(Y;\Z)\not=0$
with probability at least $1-\mu^{n^{2}},$ for all large enough $n$, where $$\mu
= \exp\left(-\frac{1}{8}\left(\frac{c}{3} -1\right)\right),$$ $0<\mu<1$.  In particular\footnote{Note that $H_2(Y;\Z)\not=0$ implies 
that $H_2(Y;G)\not=0$ for any coefficient group $G$.}, $H_2(Y;\Z)\not=0$, a.a.s.
\end{theorem}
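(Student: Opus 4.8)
The plan is to reverse the strategy of Theorem~\ref{thm1}: there a large Euler characteristic was excluded, whereas here I want to \emph{force} $\chi(Y)$ to be large and read off a nonzero second homology. Since $Y$ contains the full $1$-skeleton $\Delta_n^{(1)}$, the complex $Y$ is connected, so writing $b_i(Y)=\rk H_i(Y;\Z)$ we have $\chi(Y)=1-b_1(Y)+b_2(Y)$. Being $2$-dimensional, $Y$ has $H_2(Y;\Z)=\ker\partial_2$, a subgroup of the free abelian group of $2$-chains, hence itself free; thus $H_2(Y;\Z)\neq 0$ exactly when $b_2(Y)\geq 1$. The crucial implication is that $\chi(Y)>1$ forces $b_2(Y)>b_1(Y)\geq 0$, hence $b_2(Y)\geq 1$. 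Equivalently $\{H_2(Y;\Z)=0\}\subseteq\{\chi(Y)\leq 1\}$, so it suffices to bound $\P(\chi(Y)\leq 1)$ from above.

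First I would translate this into a statement about $f_2$. By (\ref{Euler}) the inequality $\chi(Y)\leq 1$ is equivalent to $f_2(Y)\leq\binom{n-1}{2}$. The hypothesis $p>cn^{-1}$ makes the mean large: $\E(f_2)=p\binom{n}{3}>\frac{c}{3}\binom{n-1}{2}$, which strictly exceeds $\binom{n-1}{2}$ because $c>3$. Hence $\{f_2\leq\binom{n-1}{2}\}$ is a deviation of $f_2$ \emph{below} its mean by $t:=\E(f_2)-\binom{n-1}{2}$, where $t>(\frac{c}{3}-1)\binom{n-1}{2}>0$.

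Now I would invoke the lower-tail companion of the Chernoff estimate (\ref{plus}), namely the bound $\P(f_2\leq\E(f_2)-t)\leq\exp\bigl(-t^2/(2\E(f_2))\bigr)$ from \cite{JLR}, applied with this $t$. Writing $b=pn/3>c/3>1$, so that $\E(f_2)=b\binom{n-1}{2}$ and $t=(b-1)\binom{n-1}{2}$, substitution gives an exponent equal to $\frac{(b-1)^2}{2b}\binom{n-1}{2}$. This quantity is increasing in $b$; minimizing over the admissible range $b>c/3$ and using $\binom{n-1}{2}\sim n^2/2$ therefore bounds the exponent below by a positive constant (depending only on $c$) times $n^2$. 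This yields $\P(\chi(Y)\leq 1)\leq\mu^{n^2}$ for a suitable $\mu\in(0,1)$, whence $H_2(Y;\Z)\neq 0$ a.a.s.; the statement $H_2(Y;G)\neq 0$ for an arbitrary coefficient group follows at once by universal coefficients, since $H_2(Y;\Z)$ is then free of positive rank.

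The step I expect to be the main obstacle is pinning down the precise constant, and hence the exact value of $\mu$. Unlike in Theorem~\ref{thm1}, where the Chernoff denominator $\E(f_2)+t/3$ could be bounded cleanly by $\binom{n-1}{2}$, here the denominator $\E(f_2)=\frac{pn}{3}\binom{n-1}{2}$ itself grows with $p$ and is not controlled by $\binom{n-1}{2}$. One must therefore carry out the elementary minimization of $\frac{(b-1)^2}{2b}$ over $b>c/3$ and combine it with the asymptotics of $\binom{n-1}{2}$ to arrive at the stated $\mu$; this bookkeeping is the only genuinely delicate part, everything else being a direct transcription of the proof of Theorem~\ref{thm1}.
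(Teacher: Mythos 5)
Your proposal is correct and follows essentially the same route as the paper's proof: the identity $\chi(Y)=1-b_1(Y)+b_2(Y)$, the translation of $\{H_2(Y;\Z)=0\}$ into the lower-tail event $\{f_2\le \E(f_2)-t\}$ with $t=\left(\frac{pn}{3}-1\right)\binom{n-1}{2}$, and the lower-tail Chernoff inequality (2.6) of \cite{JLR}. The delicacy you flag about the exact constant is real: the paper's intermediate step $\frac{(b-1)^2}{2b}\ge \frac{1}{2}(b-1)$ (with $b=pn/3>1$) is false as written, and the honest infimum $\frac{(c/3-1)^2}{2c/3}$ over $b>c/3$ that you propose yields, for $c$ close to $3$, a slightly weaker explicit $\mu$ than the one stated --- but this affects only the numerical constant, not the $1-\mu^{n^2}$ shape of the bound or the a.a.s.\ conclusion.
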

\begin{proof} The proof is very similar to the one of Theorem \ref{thm1} and also uses the Euler characteristic. Clearly, $\chi(Y) = 1-b_1(Y)+b_2(Y)$ (where $b_i(Y)$ denotes the $i$-dimensional Betti number,
$b_i(Y)=\rk H_i(Y;\Z)$). Thus $\chi(Y)>1$ implies $b_2(Y)>0$. We will estimate from above the probability of the complementary event $\chi(Y)\le 1$. 

Using inequality (2.6) from \cite{JLR} one has for any $t \ge 0$
\begin{eqnarray*}\label{minus}
\P\left(f_2\le p{n \choose 3} -t\right) \le \exp\left(-\frac{t^2}{2p{n\choose 3}}\right).
\end{eqnarray*}
Now choose $$t= \left(\frac{pn}{3}-1\right)\cdot {{n-1}\choose 2}.$$ Since $pn>c>3$ we have 
$$t> \left(\frac{c}{3}-1\right)\cdot {{n-1}\choose 2}>0.$$ The inequality $f_2(Y)\le  p{n \choose 3} -t$ is equivalent to $\chi(Y) \le 1$. 
Thus we obtain
\begin{eqnarray*}
\P\left( \chi(Y) \le 1\right) \le \exp\left(-\frac{t^2}{2p{n\choose 3}}\right)
\end{eqnarray*}
and, for $n$ sufficiently large,
\begin{eqnarray*} \frac{t^2}{2p{n\choose 3}} &\ge &\frac{(\frac{pn}{3} - 1)^2 \cdot {{n-1}\choose 2}}{ 2 \frac{pn}{3}}\\
&\ge& \frac{1}{2} \left(\frac{pn}{3}-1\right) \cdot {{n-1}\choose 2}\\
&\ge & \frac{1}{8} \left(\frac{c}{3} - 1 \right) \cdot n^2.
\end{eqnarray*}

Finally, by the definition of $\mu,$ 
\begin{eqnarray*}
\P(b_2(Y)=0) \, \le\,  \P(\chi(Y) \le 1)\,  \le \, \mu^{n^{2}}.
\end{eqnarray*} 

This completes the proof.
\end{proof}

Next we consider the critical case $p=3/n$.

\begin{theorem} Assume that $p=\frac{3}{n}$. Then for any $\epsilon >0$ there exists $N$ such that for all $n>N$ the probability of each of the following statements (a) and (b) concerning a random 2-complex 
$Y\in G(\Delta_n^{(2)}, p)$ is greater than $\frac{1}{2} -\epsilon$: 

(a) the fundamental group $\pi_1(Y)$ contains a noncommutative free subgroup; 

(b) $H_2(Y;\Z)\not=0$. 
\end{theorem}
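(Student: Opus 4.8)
The plan is to reduce both statements to a single sharp statement about the distribution of the number $f_2(Y)$ of 2-simplices, and then to invoke the central limit theorem. Combining the reductions already used in Theorems~\ref{thm1} and~\ref{thm2}, statement (a) follows once we show $\P(\chi(Y)<0)>\frac12-\epsilon$: indeed $Y$ is connected (it contains the full $1$-skeleton $\Delta_n^{(1)}$), so by the theorem of~\cite{FS} a negative Euler characteristic forces a nonabelian free subgroup in $\pi_1(Y)$. Likewise statement (b) follows once we show $\P(\chi(Y)>1)>\frac12-\epsilon$, since $\chi(Y)=1-b_1(Y)+b_2(Y)$ gives $b_2(Y)\ge\chi(Y)-1>0$ whenever $\chi(Y)>1$. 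Thus the whole theorem reduces to controlling the fluctuations of $\chi(Y)$, equivalently of $f_2(Y)$, about its mean.

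The key observation is that at the critical value $p=3/n$ the expectation of $f_2$ is exactly an integer and sits precisely at the relevant threshold. Indeed $\E(f_2)=p\binom{n}{3}=\binom{n-1}{2}$, so by~(\ref{Euler}) one has $\E(\chi(Y))=1$. Writing $\mu=\binom{n-1}{2}$ and using that $f_2$ is integer-valued, the two events of interest become bounded deviations of $f_2$ from its mean:
\begin{eqnarray*}
\{\chi(Y)<0\}=\{f_2\le \mu-2\},\qquad \{\chi(Y)>1\}=\{f_2\ge \mu+1\}.
\end{eqnarray*}
Hence it suffices to prove that $\P(f_2\le\mu-2)\to\frac12$ and $\P(f_2\ge\mu+1)\to\frac12$ as $n\to\infty$.

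To establish this I would apply the normal approximation to the binomial distribution. Here $f_2\sim\mathrm{Bin}(N,p)$ with $N=\binom{n}{3}$ and $p=3/n$, so the variance is $\sigma^2=Np(1-p)=\binom{n-1}{2}(1-3/n)$, which is of order $n^2/2$; in particular $\sigma\to\infty$. By the de Moivre--Laplace theorem (or, for the uniformity needed below, the Berry--Esseen bound, whose error term here is $O(\sigma^{-1})\to 0$) the normalized variable $(f_2-\mu)/\sigma$ converges to a standard Gaussian, uniformly in the cutoff. Since the two thresholds differ from $\mu$ only by the bounded amounts $-2$ and $+1$, after normalization they become $-2/\sigma\to 0$ and $1/\sigma\to 0$; using continuity of the Gaussian distribution function $\Phi$ at $0$ we obtain $\P(f_2\le\mu-2)=\Phi(-2/\sigma)+o(1)\to\Phi(0)=\frac12$ and, likewise, $\P(f_2\ge\mu+1)=1-\Phi(1/\sigma)+o(1)\to\frac12$. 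Given $\epsilon>0$, choosing $N$ so large that both probabilities exceed $\frac12-\epsilon$ for $n>N$ completes the argument.

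The main obstacle is not the limit computation but ensuring that the bounded shift of the cutoff does not affect the limit: because the normalized threshold $-2/\sigma$ is itself moving toward the point $0$ of interest, a merely pointwise central limit theorem does not immediately suffice, and one must use a uniform (Berry--Esseen) estimate, or equivalently the local limit bound $\P(f_2=k)=O(\sigma^{-1})$ holding uniformly in $k$, to absorb the $O(1)$ shift. Everything else is a routine verification that $Np=\binom{n-1}{2}$ exactly and that $\sigma\to\infty$.
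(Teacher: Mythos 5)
Your proposal is correct and follows essentially the same route as the paper: both reduce (a) and (b) to the events $\{f_2\le\binom{n-1}{2}-2\}$ and $\{f_2\ge\binom{n-1}{2}+1\}$ via the Euler characteristic identity and the theorem of \cite{FS}, and both conclude by the De Moivre--Laplace integral theorem applied to the binomial $f_2$ with mean exactly $\binom{n-1}{2}$, the $O(1)$ shift of the cutoff being absorbed because the normalized threshold tends to $0$. Your explicit remark that a uniform (Berry--Esseen type) estimate is needed to justify the moving cutoff is a sound point of care that the paper handles implicitly by citing the integral form of the theorem.
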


It is not known if  (a) and (b) exclude each other; one may ask about the probability that 
asymptotically, (a) and (b) hold simultaneously.

\begin{proof}
In the case when $p=3/n$ one has $\E(f_2)= {{n-1}\choose 2}$ and $\E(\chi)=1$ where 
 $f_2, \, \chi : G(\Delta_n^{(2)}, p)\to \Z$ are as above. From the De Moivre-Laplace Integral theorem \cite{Sh}, page 62, it follows that 
$$\P\left(f_2>{{n-1}\choose 2}\right) \sim \frac{1}{\sqrt{2\pi }}\int_0^\infty e^{-x^2/2}dx =\frac{1}{2}$$
and 
$$\P\left(f_2 \le {{n-1}\choose 2}-2\right) \sim \frac{1}{\sqrt{2\pi }}\int_{-\infty}^b e^{-x^2/2}dx \sim \frac{1}{2},$$
where $b$ can be found from the equation $b\sqrt{{n\choose 3}p(1-p)}=-2$, i.e. $$b=-\frac{2}{\sqrt{{ {n-1}\choose 2} \cdot \left(1-\frac{3}{n}\right) }}\sim 0.$$
By (\ref{Euler}), the inequality $f_2(Y) >{{n-1}\choose 2}$ is equivalent to $\chi(Y)>1$ and the inequality $f_2(Y)<{{n-1}\choose 2}-1$ is equivalent to $\chi(Y)< 0$. 
Thus we see that $$\P(\chi(Y)>1)\sim \frac{1}{2}, \quad \mbox{and}\quad \P(\chi(Y)<0) \sim \frac{1}{2}$$
and thus, for any given $\epsilon > 0,$
$$\P(\pi_1(Y)\supset F_2) \ge \P(\chi(Y)<0)\ge \frac{1}{2} - \epsilon,$$
$$\P(b_2(Y)>0) \ge \P(\chi(Y)>1)\ge \frac{1}{2} - \epsilon$$
for sufficiently large $n$. Here $F_2$ denotes the free group with two generators.
\end{proof}

\section{Simplicial embeddings and immersions}\label{secembeddings}

In this section we consider the containment problem for subcomplexes of random 2-dimensional complexes which is similar to the containment problem for random graphs, 
see \cite{JLR}, chapter 3. We also study simplicial immersions, which are more general than simplicial embeddings. 

Let $S$ be a 2-dimensional finite simplicial complex. We denote by $v=v_S$ and $f=f_S$ the numbers of vertices and faces of $S$ respectively. The set of vertices of $S$ is denoted by $V(S)$. We assume that $S$ is fixed, i.e. independent of $n$. 

\begin{definition}\label{def1}
A simplicial embedding $g: S\hookrightarrow Y$, where $Y\in G(\Delta_n^{(2)}, p)$ is a random 2-complex, is defined as an injective map of the set of vertices $V(S)$ of $S$ into the set of vertices $\{1, \dots, n\}$ of $Y$ satisfying the following condition: for any triple of distinct vertices $u_1, u_2, u_3\in V(S)$ which span a simplex in $S$, 
the corresponding points $g(u_1), g(u_2), g(u_3)\in \{1, \dots, n\}$ span a face of $Y$.   
\end{definition}

Next we define the following slightly more general notion. 

\begin{definition}\label{def2}
A simplicial immersion $g: S\looparrowright Y$ into a random 2-complex $Y\in G(\Delta_n^{(2)}, p)$  is defined as a map  of the set of vertices $V(S)$ of $S$ into the set of vertices $\{1, \dots, n\}$ of $Y$ satisfying the following two conditions: 

(a) 
for any triple of distinct vertices $u_1, u_2, u_3\in V(S)$ which span a $2$-simplex in $S$, 
the corresponding points $g(u_1), g(u_2), g(u_3)\in \{1, \dots, n\}$ are pairwise distinct and span a face of $Y$; 

(b) for any pair of distinct 2-simplexes $\sigma$ and $\sigma'$ of $S$, the corresponding 2-simplexes 
$g(\sigma)$ and 
$g(\sigma')$ of $Y$ are distinct.  
\end{definition}

Note that a simplicial immersion $g:S\looparrowright  Y$ is not necessarily injective on the set of vertices $V(S)$ but any pair of vertices $u_1, u_2\in V(S)$ with $g(u_1)=g(u_2)$ cannot lie in a 2-simplex of $S$. We also require that distinct 2-simplexes of $S$ are mapped to distinct 2-simplexes of $Y$.

If $g:S\looparrowright  Y$ is a simplicial immersion then for any subcomplex $S'\subset S$ the restriction $g|S'$ is also a simplicial immersion $S'\looparrowright  Y$.

\begin{lemma}\label{lm1}The probability that a 2-dimensional simplicial complex $S$ with $v$ vertices and $f$ faces admits a simplicial immersion into a random 2-complex 
$Y\in G(\Delta_n^{(2)}, p)$ is less or equal than $n^{v}p^{f}$, i.e.
\begin{eqnarray}
\P(S\looparrowright Y) \le n^vp^f.
\end{eqnarray}
\end{lemma}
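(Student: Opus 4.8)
The plan is to run a first-moment (union bound) argument over all possible vertex maps. First I would note that the event $S\looparrowright Y$ is precisely the union, over all maps $g\colon V(S)\to\{1,\dots,n\}$ of the vertex set of $S$, of the event $E_g$ that this particular $g$ is a simplicial immersion. Since $S$ has $v$ vertices and the image vertex set has $n$ elements, there are exactly $n^v$ such maps $g$ (we do not require injectivity, in accordance with Definition \ref{def2}). The union bound then gives
$$\P(S\looparrowright Y)\le \sum_g \P(E_g),$$
so it suffices to show that $\P(E_g)\le p^f$ for each fixed map $g$.

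Next I would analyze $\P(E_g)$ for a fixed $g$ by separating the defining conditions (a) and (b) into a deterministic part and a random part. The deterministic part consists of the purely combinatorial requirements on $g$: that the three vertices of each $2$-simplex of $S$ have pairwise distinct images (the first half of condition (a)), and that distinct $2$-simplexes of $S$ are sent to distinct $2$-simplexes (condition (b)). The random part is the requirement that each of the $f$ image simplexes $g(\sigma)$ actually occurs as a face of $Y$. If $g$ violates any of the deterministic conditions, then $E_g$ cannot occur and $\P(E_g)=0$, so such maps contribute nothing to the sum.

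The key step is the remaining case, when $g$ satisfies all deterministic conditions. Here condition (b) is exactly what is needed: it guarantees that the $f$ images $g(\sigma_1),\dots,g(\sigma_f)$ of the $2$-simplexes of $S$ are $f$ \emph{pairwise distinct} $2$-simplexes of $\Delta_n^{(2)}$. Because distinct $2$-simplexes are included in $Y$ independently, each with probability $p$ in the Linial--Meshulam model, the probability that all $f$ of these distinct faces are present equals precisely $p^f$. Thus $\P(E_g)=p^f$ in this case, and $\P(E_g)\le p^f$ in all cases.

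Combining the two bounds yields $\P(S\looparrowright Y)\le n^v p^f$, as claimed. The only point deserving care — though it is more a bookkeeping subtlety than a genuine obstacle — is the role of condition (b): without it, two distinct $2$-simplexes of $S$ could be mapped onto a single face of $Y$, in which case the $f$ inclusion events would no longer be distinct and independent and the clean factor $p^f$ would be replaced by a larger quantity, breaking the bound.
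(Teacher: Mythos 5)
Your proposal is correct and follows essentially the same route as the paper: a first-moment/union bound over all $n^v$ vertex maps $g$, with the observation that for each admissible $g$ the $f$ image faces are distinct (by condition (b)) and hence present independently with probability $p^f$. You are in fact slightly more careful than the paper, which asserts $\E(J_g)=p^f$ for every $g$ rather than only for those satisfying the deterministic conditions, but the resulting inequality is the same.
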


\begin{proof} For a map $g: V(S) \to \{1, \dots, n\}$ denote by $J_g:G(\Delta_n^{(2)}, p)\to \{0,1\}$ the random variable such that $J_g(Y)=1$ if and only if 
$g$ determines a simplicial immersion $S\looparrowright  Y$, i.e. if the condition of Definition \ref{def2} is satisfies. 
Clearly, the expectation $\E(J_g)$ equals $p^{f}$.
%
The random variable 
$X_S=\sum_g J_g$ counts the number of simplicial immersions $S\looparrowright  Y$, where $g$ runs over all maps $V(S) \to \{1, \dots, n\}$. Thus
$$\E(X_S) =\sum_g \E(J_g) \le n^v\cdot p^f$$
and
$$\P(S\looparrowright  Y) = \P(X_S>0) \le \E(X_S) \le n^vp^f,$$
by the first moment method. 
\end{proof}
Next we define a useful numerical invariant which was also mentioned in \cite{BHK}. 
\begin{definition}\label{def6} For a simplicial 2-complex $S$ let 
$\mu(S)$
 denote 
$$\mu(S) 
=\frac{v}{f}\, \in \, \Q,$$
where $v=v_S$ and $f=f_S$ are the numbers of vertices and faces in $S$. 
\end{definition}

\begin{corollary}\label{cor7} If the probability parameter $p$ satisfies $$p\ll n^{-\mu(S)}$$ then the 2-complex $S$ admits no simplicial immersions into a  random 2-complex $Y\in G(\Delta_n^{(2)},p)$, a.a.s.
\end{corollary}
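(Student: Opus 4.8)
The plan is to deduce Corollary~\ref{cor7} directly from Lemma~\ref{lm1} together with the hypothesis $p \ll n^{-\mu(S)}$, using the first moment bound already established. The key observation is that the quantity $n^v p^f$ appearing in Lemma~\ref{lm1} is precisely the object one controls once the relationship between $p$, $n$, and the ratio $\mu(S) = v/f$ is made explicit.

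First I would recall that by Lemma~\ref{lm1} we have the upper bound
\begin{eqnarray*}
\P(S \looparrowright Y) \le n^v p^f.
\end{eqnarray*}
Next I would use the hypothesis $p \ll n^{-\mu(S)}$, which by Definition~\ref{def6} means $p \ll n^{-v/f}$. The natural step is to raise this to the $f$-th power: since $f = f_S$ is a fixed positive integer (as $S$ is assumed independent of $n$), the relation $p \ll n^{-v/f}$ gives $p^f \ll n^{-v}$. Substituting this into the bound from Lemma~\ref{lm1} yields
\begin{eqnarray*}
\P(S \looparrowright Y) \le n^v p^f \ll n^v \cdot n^{-v} = 1,
\end{eqnarray*}
so that $n^v p^f \to 0$ as $n \to \infty$, and therefore $\P(S \looparrowright Y) \to 0$. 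This is exactly the a.a.s.\ statement that $S$ admits no simplicial immersion into $Y$.

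The only point requiring a little care is the passage from $p \ll n^{-v/f}$ to $p^f \ll n^{-v}$; here I would make explicit that $f$ is a constant (independent of $n$), so that raising both sides to a fixed power preserves the little-$o$ asymptotic relation. More precisely, $p \ll n^{-v/f}$ means $p\, n^{v/f} \to 0$, and raising to the fixed power $f$ gives $(p\, n^{v/f})^f = p^f\, n^{v} \to 0$, which is the required statement $p^f \ll n^{-v}$. I do not anticipate any genuine obstacle: the corollary is an immediate quantitative consequence of the first-moment estimate, and the entire content lies in the bookkeeping of the exponents together with the fact that $S$, and hence $v$ and $f$, are fixed while $n \to \infty$.
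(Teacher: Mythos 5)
Your proposal is correct and follows exactly the paper's argument: the paper likewise applies the first-moment bound $\P(S\looparrowright Y)\le n^vp^f$ from Lemma~\ref{lm1} and observes that $pn^{\mu(S)}\to 0$ forces $n^vp^f=(pn^{v/f})^f\to 0$ since $f$ is fixed. The only difference is that you spell out the exponent bookkeeping which the paper leaves implicit.
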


\begin{proof} The assumption $p\ll n^{-\mu(S)}$ means that $pn^{\mu(S)}\to 0$ as $n\to \infty$. Then $n^vp^f\to 0$ and the result now follows from Lemma \ref{lm1}. 
\end{proof}

As an example consider a simplicial graph $\Gamma$ and the cone over it $S=C(\Gamma)$. One has $v_S=v_\Gamma+1$ and $f_S=e_\Gamma$. Therefore 
$$\mu(S) = \frac{v_\Gamma+1}{e_\Gamma}.$$
Using Corollary \ref{cor7} we obtain:

\begin{corollary}\label{cor71} If a graph $\Gamma$ satisfies $\chi(\Gamma)<0$ then $\mu(S)\le 1$ where $S=C(\Gamma)$ is the cone over $\Gamma$. Therefore, if, $p\ll n^{-1}$, then the cone
$S=C(\Gamma)$ with $\chi(\Gamma)<0$ admits no simplicial immersions into a random 2-complex $Y\in G(\Delta_n^{(2)},p)$, a.a.s. 
\end{corollary}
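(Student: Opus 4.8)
The statement to prove is Corollary~\ref{cor71}, which has two parts: first a purely combinatorial inequality $\mu(S)\le 1$ for the cone $S=C(\Gamma)$ when $\chi(\Gamma)<0$, and second the consequence about nonexistence of immersions, which is immediate from Corollary~\ref{cor7} once the inequality is in hand.

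Let me verify the combinatorial part. We have $v_S = v_\Gamma + 1$ and $f_S = e_\Gamma$, so $\mu(S) = (v_\Gamma+1)/e_\Gamma$. The hypothesis $\chi(\Gamma) < 0$ means $v_\Gamma - e_\Gamma < 0$, i.e. $e_\Gamma > v_\Gamma$, i.e. $e_\Gamma \ge v_\Gamma + 1$ (since these are integers). Therefore $(v_\Gamma+1)/e_\Gamma \le (v_\Gamma+1)/(v_\Gamma+1) = 1$. So $\mu(S) \le 1$.

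Now for the plan.

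The proof has two clearly separated halves: an arithmetic inequality and an application of the preceding corollary. For the first half, the plan is to compute $\mu(S)$ directly from the stated formula and the combinatorics of the cone. First I would record that coning a graph $\Gamma$ over a new apex vertex turns each edge of $\Gamma$ into a triangle, so the number of faces of $S=C(\Gamma)$ equals the number of edges $e_\Gamma$, while the vertices of $S$ are those of $\Gamma$ together with the apex, giving $v_S=v_\Gamma+1$. Thus $\mu(S)=(v_\Gamma+1)/e_\Gamma$ as already displayed.

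The heart of the argument is the translation of the curvature hypothesis into a divisibility-free integer inequality. The condition $\chi(\Gamma)<0$ reads $v_\Gamma-e_\Gamma<0$, and since $v_\Gamma$ and $e_\Gamma$ are integers this is equivalent to $e_\Gamma\ge v_\Gamma+1$. Substituting into the expression for $\mu(S)$ gives
\begin{equation*}
\mu(S)=\frac{v_\Gamma+1}{e_\Gamma}\le \frac{v_\Gamma+1}{v_\Gamma+1}=1,
\end{equation*}
which is the asserted inequality. I do not anticipate any obstacle here; the only subtlety worth flagging is the passage from the strict inequality $e_\Gamma>v_\Gamma$ to $e_\Gamma\ge v_\Gamma+1$, which uses integrality and is exactly what upgrades a strict curvature bound into the clean bound $\mu(S)\le 1$.

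For the second half, the conclusion follows formally. From $\mu(S)\le 1$ I get $n^{-1}\le n^{-\mu(S)}$ for all $n\ge 1$, so the hypothesis $p\ll n^{-1}$ forces $p\ll n^{-\mu(S)}$ as well, i.e. $p\,n^{\mu(S)}\to 0$. Corollary~\ref{cor7} then applies verbatim and yields that $S=C(\Gamma)$ admits no simplicial immersion into a random $2$-complex $Y\in G(\Delta_n^{(2)},p)$, a.a.s. In short, the whole statement is a one-line arithmetic observation about the cone, combined with the immersion threshold already established; there is no genuine difficulty beyond bookkeeping the cone's face and vertex counts correctly.
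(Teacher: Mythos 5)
Your proposal is correct and follows the same route as the paper: the paper records $v_S=v_\Gamma+1$, $f_S=e_\Gamma$, hence $\mu(S)=(v_\Gamma+1)/e_\Gamma$, and invokes Corollary~\ref{cor7}; you merely fill in the (correct) integrality step $e_\Gamma\ge v_\Gamma+1$ that the paper leaves implicit.
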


This result will be used later in this paper.

\begin{definition}\label{def8} Let $S$ be a finite 2-dimensional simplicial complex. Define
\begin{eqnarray}
\tilde \mu(S) = \min_{S'\subset S} \mu(S'),
\end{eqnarray}
where the minimum is formed over all subcomplexes $S'\subset S$ or, equivalently, over all pure
subcomplexes $S' \subset S$. 
\end{definition}

Note that the invariant $\tilde \mu$ is monotone decreasing: if $S$ is a subcomplex of $T$ then $\tilde \mu(S) \ge \tilde \mu(T)$. 

The following result complements Corollary \ref{cor7}.

\begin{theorem}\label{embed} Let $S$ be a finite simplicial complex.  
\begin{enumerate}
  \item[(A)] If $p\ll n^{-\tilde \mu (S)}$ then the probability that $S$ admits a  simplicial immersion into a random 2-complex $Y\subset G(n, p)$ tends to zero as $n\to \infty$.
  \item[(B)] If $p\gg n^{-\tilde \mu (S)}$ then the probability that $S$ admits a simplicial embedding into a random 2-complex $Y\subset G(n, p)$ tends to one as $n\to \infty$. 
\end{enumerate}
\end{theorem}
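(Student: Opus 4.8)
The plan is to prove the two directions separately, since they rely on opposite applications of the probabilistic method.

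For part (A), the strategy is to reduce the immersion statement to Corollary~\ref{cor7}. The key observation is that if $g\colon S\looparrowright Y$ is a simplicial immersion, then its restriction to \emph{any} subcomplex $S'\subset S$ is again a simplicial immersion of $S'$ into $Y$, as noted after Definition~\ref{def2}. In particular, choose the subcomplex $S'\subset S$ realizing the minimum in the definition of $\tilde\mu$, i.e.\ with $\mu(S')=\tilde\mu(S)$. Then the existence of an immersion $S\looparrowright Y$ forces the existence of an immersion $S'\looparrowright Y$. The hypothesis $p\ll n^{-\tilde\mu(S)}=n^{-\mu(S')}$ together with Corollary~\ref{cor7} gives that $\P(S'\looparrowright Y)\to 0$, and hence $\P(S\looparrowright Y)\le \P(S'\looparrowright Y)\to 0$. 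This direction is short and essentially formal once one has the monotonicity of immersions under restriction.

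Part (B) is the substantial direction, and I expect it to be the main obstacle. Here we want to show existence of an embedding with probability tending to one, so the natural tool is the second moment method applied to the random variable $X_S$ counting simplicial embeddings of $S$ into $Y$ (defined analogously to $X_S$ for immersions in Lemma~\ref{lm1}, but with $g$ ranging over \emph{injective} maps $V(S)\to\{1,\dots,n\}$). The first moment satisfies $\E(X_S)\sim n^v p^f$, which tends to infinity precisely when $p\gg n^{-v/f}=n^{-\mu(S)}$; however, the hypothesis is only $p\gg n^{-\tilde\mu(S)}$, and since $\tilde\mu(S)\le\mu(S)$, we may have $\E(X_S)$ bounded or even small. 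The resolution is that $\tilde\mu(S)$, not $\mu(S)$, is the correct threshold: one must show that the dominant contribution to the variance comes from overlaps along subcomplexes, and the condition $p\gg n^{-\tilde\mu(S)}$ guarantees $\E(X_{S'})\to\infty$ for \emph{every} subcomplex $S'\subset S$, which is exactly what is needed to control the second moment.

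The core computation in (B) is therefore to bound $\E(X_S^2)$. Writing $\E(X_S^2)=\sum_{g,h}\P(J_g J_h=1)$, one groups pairs $(g,h)$ of embeddings according to the subcomplex $W\subset S$ along which their images coincide; for a fixed intersection pattern the two copies share the faces lying in $W$, so the joint probability is $p^{2f-f_W}$ and the number of such pairs is of order $n^{2v-v_W}$. Thus the ratio $\E(X_S^2)/\E(X_S)^2$ is, up to constants, a sum over subcomplexes $W$ of terms of the shape $n^{-v_W}p^{-f_W}=(n^{v_W}p^{f_W})^{-1}\cdot(\text{harmless factors})$, and each such factor tends to zero precisely because $p\gg n^{-\tilde\mu(S)}$ forces $n^{v_W}p^{f_W}=n^{f_W(\mu(W)-\cdot)}\to\infty$ for all $W$. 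I would organize the argument so that the minimality defining $\tilde\mu(S)$ is invoked exactly at the point where each overlap term is shown to be negligible; the bookkeeping over the finitely many subcomplexes $W$ is routine but must be done with care to confirm $\E(X_S^2)/\E(X_S)^2\to 1$. By Chebyshev's inequality this yields $X_S>0$ a.a.s., producing the desired embedding, and I would conclude by remarking that a simplicial embedding is in particular the topological embedding sought in the applications.
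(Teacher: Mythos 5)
Your proposal follows the paper's proof essentially verbatim: part (A) by restricting a hypothetical immersion to a subcomplex realizing $\tilde\mu(S)$ and invoking Corollary~\ref{cor7}, and part (B) by the second moment method for the embedding count $X_S$, with the variance decomposed over the overlap subcomplexes $H$, each pair contributing $p^{2f-f_H}$ with $O(n^{2v-v_H})$ pairs, so that ${\rm Var}(X_S)/\E(X_S)^2$ is a finite sum of terms of order $(n^{v_H}p^{f_H})^{-1}\to 0$. One small correction to your motivating remark: since $\tilde\mu(S)\le\mu(S)$, the hypothesis $p\gg n^{-\tilde\mu(S)}$ is \emph{stronger} than $p\gg n^{-\mu(S)}$, so $\E(X_S)\to\infty$ is automatic and your worry that the first moment might stay bounded has the inequality backwards --- the role of $\tilde\mu$ is to control every overlap term $n^{v_H}p^{f_H}$, not to rescue the first moment.
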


\begin{proof} Let $S'\subset S$ be a subcomplex such that $\mu(S') = \tilde \mu(S) \le \mu(S)$. Then 
$$\P(S\looparrowright Y) \le \P(S'\looparrowright Y)$$ and $\P(S'\looparrowright Y)$ tends to zero assuming that $p \ll  n^{-\mu(S')}=n^{-\tilde \mu(S)}$ by Corollary \ref{cor7}. This proves the statement (A).

The following arguments prove the statement (B). 
Let $v$ denote the number of vertices of $S$. A simplicial embedding of $S$ into $Y$ is defined by an injective map 
$g:V(S) \to \{1, \dots, n\}$ where $V(S)$ is the set of vertices of $S$.
The function $X_S=\sum_{g}J_g: G(\Delta_n^{(2)},p)\to \Z$ counts the number of simplicial embeddings; here $g: V(S) \to \{1, \dots, n\}$ runs over all injective maps and $J_g$ denotes the random variable 
defined as in the proof of Lemma \ref{lm1}. 

For a pair of injective maps $g, g': V(S) \to \{1, \dots, n\}$ consider the pure subcomplex $H=H(g, g') \subset S$ which is defined as the union of all 2-simplexes $\sigma\subset S$ 
with the property $g(\sigma)\subset g'(S)$. 
Note that the product random variable 
$J_gJ_{g'}$ has the expectation 
$$\E(J_gJ_{g'})= p^{2f-f_H},$$
where $f=f_S$ is the number of faces of $S$ and $f_H$ is the number of 2-simplexes in $H$. 

Now we fix a pure subcomplex $H\subset S$ and
%
consider all ordered pairs of injective maps
$g, g': V(S) \to \{1, \dots, n\}$ with 
$H(g, g')=H$. 
The number $N$ of such pairs $g, g'$ satisfies $$ N \le C_{H} n^{2v - v_H}$$
for some constant  $C_H > 0 $ depending on $H$. 


The variance of $X_S$ can be estimated as follows
\begin{eqnarray*}
{\rm {Var}}(X_S) &=& \E(X_S^2)-\E(X_S)^2 \\ 
&=& \sum_{g,g'} \left[\E(J_gJ_{g'}) -\E(J_g)\E(J_{g'})\right]\\
& \le & \sum_{H\subset S} C_{H} n^{2v-v_H}\left[p^{2f-f_H}-p^{2f}\right]\\
&=& 
\sum_{H\subset S} C_{H} n^{2v-v_H}p^{2f-f_H}[1-p^{f_H}].
\end{eqnarray*}
Since for $n$ sufficiently large,
\begin{eqnarray*} 
\E(X_S) = {n\choose v}v! \cdot p^{f}\, \, \ge \, \frac{1}{2}\cdot n^{v} p^{f},\end{eqnarray*}
it follows that
$$\frac{{\rm {Var}}(X_S)}{ \E (X_S)^2} \le 4 \cdot (1-p) \cdot \sum_{H\subset S} (f_H C_H) \cdot (n^{v_H} p^{f_H})^{-1}.$$

Now, if $p\gg n^{-\tilde \mu(S)}$ then $n^{v_H}p^{f_H}\to \infty$ for any pure subcomplex  $H\subset S$ 
and therefore each term in the sum above tends to zero.
Thus using the Chebyshev inequality
$$\P(X_S=0)\le \frac{{\rm {Var}}(X_S)}{\E (X_S)^2},$$
we see that $\P(X_S=0)\to 0$ as $n\to \infty$. This implies statement (B).
\end{proof}

The above proof gives also the following quantitative statement:

\begin{corollary} Let $S$ be a fixed 2-complex. Then the probability $\P(S\not\subset Y)$ that $S$ is not embeddable into a random 2-complex $Y$ can be estimated by 
\begin{eqnarray}
\P(S\not\subset Y) \le C\cdot (1-p)\cdot \sum_{H\subset S, f_H>0} (n^{v_H}p^{f_H})^{-1},
\end{eqnarray}
where $C$ is a constant depending on $S$ and $H$ runs through all pure subcomplexes of $S$.  
\end{corollary}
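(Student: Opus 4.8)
The plan is to read off the desired inequality directly from the second-moment computation already carried out in the proof of Theorem~\ref{embed}(B); no new probabilistic input is needed, only careful bookkeeping of the constants produced there. First I would recall that embeddability of $S$ into $Y$ is exactly the event $X_S>0$, so that $\P(S\not\subset Y)=\P(X_S=0)$, and that the Chebyshev inequality gives
\begin{eqnarray*}
\P(X_S=0)\le \frac{{\rm {Var}}(X_S)}{\E(X_S)^2}.
\end{eqnarray*}
The proof of Theorem~\ref{embed}(B) already bounds the right-hand side by
\begin{eqnarray*}
\frac{{\rm {Var}}(X_S)}{\E(X_S)^2}\le 4\,(1-p)\sum_{H\subset S}(f_H C_H)\,(n^{v_H}p^{f_H})^{-1},
\end{eqnarray*}
the sum running over the pure subcomplexes $H=H(g,g')$ of $S$. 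So the only remaining task is to package the numerical factors into a single constant.

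Next I would observe that the contribution of the empty pure subcomplex, equivalently of any $H$ with $f_H=0$, vanishes: such an $H$ enters the variance estimate through the factor $1-p^{f_H}$, which equals $0$ when $f_H=0$. Hence the sum may be restricted to pure subcomplexes with $f_H>0$ without affecting the bound. Since $S$ is fixed (independent of $n$), it has only finitely many pure subcomplexes, and for each of them the quantities $f_H$ and $C_H$ are constants depending on $S$ and $H$ alone. Bounding each coefficient $f_H C_H$ by its maximum and setting
\begin{eqnarray*}
C=4\cdot\max_{H\subset S,\, f_H>0} f_H C_H,
\end{eqnarray*}
a finite constant depending only on $S$, I obtain
\begin{eqnarray*}
\P(S\not\subset Y)\le C\,(1-p)\sum_{H\subset S,\, f_H>0}(n^{v_H}p^{f_H})^{-1},
\end{eqnarray*}
which is precisely the claimed estimate.

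The one point requiring care, and really the only potential obstacle, is the uniformity of $C$ in $n$. This is guaranteed exactly because $S$ does not vary with $n$: the set of pure subcomplexes $H$, and with it the finite collection of geometric constants $C_H$ together with the integers $f_H$, is fixed once and for all, so the maximum above is taken over a finite index set and is independent of $n$. Were $S$ permitted to grow with $n$ this step would break down, but under the standing hypothesis that $S$ is a fixed $2$-complex the bound holds as stated. All the analytic content has already been established in the proof of Theorem~\ref{embed}, so the corollary follows with no further estimation.
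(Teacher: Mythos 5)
Your proposal is correct and matches the paper's intent exactly: the corollary is stated there with the remark that ``the above proof gives also the following quantitative statement,'' i.e.\ it is read off from the Chebyshev/second-moment bound in the proof of Theorem~\ref{embed}(B), which is precisely what you do. Your additional observations --- that the terms with $f_H=0$ drop out because of the factor $1-p^{f_H}$, and that $C=4\max_H f_H C_H$ is finite and independent of $n$ because $S$ is fixed --- are exactly the bookkeeping the paper leaves implicit.
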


\section{Proof of Theorem \ref{thm1intro}.}

In this section we prove Theorem \ref{thm1intro} stated in the Introduction. 

Note that the assumptions and conclusions of Theorem \ref{thm1intro} are stronger than those of Theorem \ref{thm1}.
One may also compare Theorem \ref{thm1intro} with the main result of \cite{CFK} which has stronger assumptions and conclusion than
Theorem \ref{thm1intro}. 


\begin{proof} For any triple of integers $x, y, z$ (with $x\ge 3$, $y\ge 3$ and $z\ge 0$) 
consider two graphs $\Gamma_{x,y,z}$ and $\Gamma'_{x,y,z}$ drawn schematically in Figure \ref{xyz}. 
\begin{figure}[h]
\begin{center}
\resizebox{12cm}{3cm}{\includegraphics[27,483][561,606]{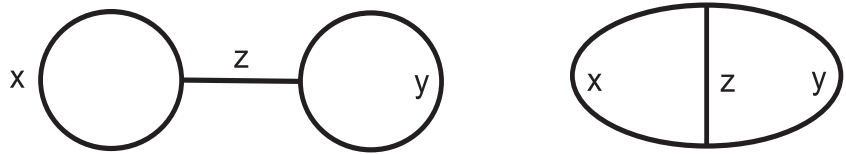}}
\end{center}
\caption{Graphs $\Gamma_{x,y, z}$ (left) and $\Gamma'_{x,y,z}$ (right).}\label{xyz}
\end{figure}
The graph $\Gamma_{x,y,z}$ is topologically the union of two circles joined by an interval; the circle on the left consists of $x$ intervals, the circle on the right is subdivided into
$y$ intervals, and the interval connecting them  consists of $z$ subintervals. The graph $\Gamma'_{x,y,z}$ shown schematically on the right of Figure \ref{xyz}, is the union of three arcs consisting of $x, y$ and $z$ intervals. Clearly $\chi(\Gamma_{x,y,z}) =-1=\chi(\Gamma'_{x,y,z})$. In the case $z=0$ the corresponding interval degenerates to a point.

It is easy to see that {\it any graph $\Gamma$ with $\chi(\Gamma)<0$ contains, as a subgraph, 
either $\Gamma_{x,y,z}$, or 
$\Gamma'_{x,y,z}$, for some $x,y,z$.}

Consider the cones $S_{x,y,z}=C(\Gamma_{x,y,z})$ and $S'_{x,y,z}=C(\Gamma'_{x,y,z})$. 
By the arguments leading to Corollary \ref{cor71} we have $$\mu(S_{x,y,z})=\mu(S'_{x,y,z})=1.$$
Applying Lemma \ref{lm1} we find
$$\P(S_{x,y,z}\looparrowright Y)\le (pn)^f$$ where $f=x+y+z$. Thus, 
\begin{eqnarray*}
\sum_{x,y\ge 3, \, z\ge 0} \P(S_{x,y,z}\looparrowright Y)&\le& \sum_{f\ge 6}f^2\cdot (pn)^f \\
&\le &\sum_{f\ge 6}(2pn)^f = \frac{(2pn)^6}{1-2pn}.
%
%
\end{eqnarray*}
We see that {\it if $pn\to 0$, then the probability that there exist $x,y,z$ such that the 2-complex $S_{x,y,z}$ admits a simplicial immersion into $Y$ tends to zero
 as $n\to \infty$.}

Similarly, {\it if $pn\to 0$, then the probability that there exist 
$x,y,z$ such that the 2-complex $S'_{x,y,z}$ admits a simplicial immersion into $Y$ tends to zero.}

Consider a vertex $v$ of the random 2-complex $Y$. The link $L_v$ of $v$ is a graph and the cone $C(L_v)$ embeds simplicially into $Y$. 
If for a connected component $L'_v$ of $L_v$ one has 
$\chi(L'_v)<0$ then 
for some integers $x,y,z$ the component $L'_v$ contains either $\Gamma_{x,y,z}$ or $\Gamma'_{x,y,z}$. 
Thus we see that $\chi(L'_v)<0$ implies that for some $x,y,z$ the complex $Y$ contains either $S_{x,y,z}$ or $S'_{x,y,z}$ as a subcomplex. 
Using the arguments given above we obtain that {\it for any vertex $v$ of $Y$, the Euler characteristic of 
every connected component 
$L'_v$ of the link $L_v$ of 
$v$ satisfies 
$$\chi(L'_v)\ge 0,$$ 
a.a.s.} In other words, every connected component of the link of any vertex of $Y$ is either contractible or is homotopy equivalent to the circle. 

Let $S$ be a pure and closed simplicial subcomplex of $Y$. The above arguments show that the link of any vertex of $S$ is a disjoint union of circles. In other words, we obtain that {\it any pure closed subcomplex $S\subset Y$ is a closed pseudo-surface}, i.e. every edge of $S$ is incident to exactly two 2-simplexes of $S$, a.a.s. 

For any two positive integers $x, y\ge 3$ with $\max(x, y) \ge 4$, let $L_{x, y}$ be a subdivision of the disk $D^2$ shown in Figure \ref{lxy}. 
The complex $L_{x,y}$ has two internal vertices $v$, $w$ such that the 
degree of $v$ is $x$ and the degree of $w$ is $y$. 
\begin{figure}[h]
\begin{center}
\resizebox{7cm}{4cm}{\includegraphics[67,417][438,678]{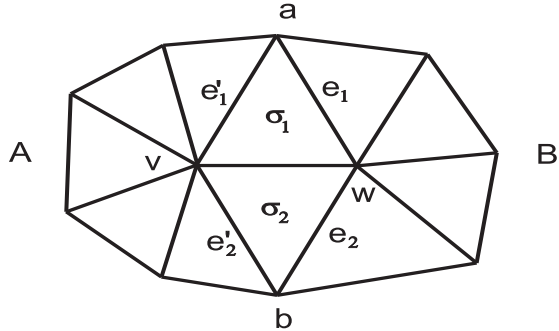}}
\end{center}
\caption{2-complex $L_{x,y}$.}\label{lxy}
\end{figure}
The total number of vertices of $L_{x,y}$ equals $x+y-2$; the number of faces of $L_{x,y}$ is also $x+y-2$; therefore $\mu(L_{x,y})=1$. 

In the special case $x=3$ and $y=3$ the complex $L_{3,3}$ is defined to be the tetrahedron with vertices $v, w, a, b$. 
%
%
The equality $\mu(L_{3,3})=1$ remains true.

By Lemma \ref{lm1}, $$\P(L_{x,y}\looparrowright Y)\le (pn)^f,$$ where $f=x+y-2$. Thus, 
\begin{eqnarray*}
\sum_{x, y\ge 3} \P(L_{x,y}\looparrowright Y)&\le& \sum_{f\ge 4}f\cdot (pn)^f \\
&\le & \sum_{f\ge 4} (2pn)^{f} = \frac{(2pn)^4}{1-(2pn)}.
\end{eqnarray*}
This shows that, if $pn\to 0$, then, with probability tending to one as $n\to \infty$, none of the complexes $L_{x,y}$ can be immersed $L_{x, y}\looparrowright Y$ into $Y$. 

Next we show that for any nonempty closed pseudo-surface $S$ there exist positive integers $x, y\ge 3$ and an immersion $L_{x,y}\looparrowright S$. 
Consider an edge $e=vw$ of $S$ and two 2-simplexes $\sigma_1$ and $\sigma_2$
incident to it, as shown on Figure \ref{lxy}. The link of $v$ in $S$ is a disjoint union of circles. It contains the edges $e_1$ and $e_2$ shown on Figure \ref{lxy}. Therefore we may find a simple arc $A$ in the link of $v$ in $S$ connecting the points $a$ and $b$ and disjoint from the interior of the arc $e_1\cup e_2$. 
 Similarly we may find a simple arc $B$ connecting $a$ and $b$ in the link of $w$ in $S$ and disjoint from the interior of 
$e'_1\cup e'_2$. 
Let $x$ and $y$ be such that the number of 2-simplexes in arc $A$ (correspondingly, $B$) is $x-2$ (correspondingly $y-2$). It is now obvious that we obtain {\it an immersion} of the 2-complex $L_{x,y}$ into $S$. It may not be an embedding since the images of some points of $A$ may coincide with the images of some points of $B$. 

Now we see that if a random 2-complex contains a closed pseudo-surface then there is an immersion $L_{x,y}\looparrowright Y$. 

Hence, summarizing the statements made above,  we conclude that {\it in the case $pn\to 0$, a random 2-complex 
contains no nonempty closed two-dimensional subcomplexes $S\subset Y$, a.a.s. }

Let $Y$ be a finite simplicial 2-complex. An edge of $Y$ is called {\it free} if it is incident to a single 2-simplex. A 2-simplex of $Y$ is called {\it free} if at least one of its edges is free. 
Let $\sigma_1, \dots, \sigma_k$ be all free 2-simplexes of $Y$; pick a sequence of free edges $e_1, \dots, e_k$ with $e_i\subset \sigma_i$. The subcomplex 
$$Y'=Y-\cup_{i=1}^k {\rm {int}}(\sigma_i) - \cup_{i=1}^k{\rm {int}}(e_i)$$
is obtained from $Y$ by collapsing all free 2-simplexes. The operation $Y\searrow Y'$ is called { \it a simplicial collapse}. Clearly, $Y'\subset Y$ is a deformation retract of $Y$. 

The procedure of collapse can be iterated $Y\searrow Y' \searrow Y''\searrow \dots$. There are two possibilities: either (a) after 
a finite number of collapses 
we obtain a {\it closed} 2-dimensional complex $Y^{(k)}$; or (b) for some $k$ the complex $Y^{(k)}$ is one-dimensional, i.e. a graph. 

Our discussion above implies that if $pn \to 0$, then for a random 2-complex $Y$ the possibility (a) happens with probability tending to $0$. Therefore, with probability tending to $1$, 
a random 2-complex collapses to a graph, under the assumption $p\ll n^{-1}$. 

This completes the proof.
\end{proof}

\begin{remark} {\rm The main step of the above proof was to show that for $p\ll n^{-1}$ a random 2-complex $Y$ contains no nonempty closed 2-dimensional subcomplexes 
$S\subset Y$. From Lemma 19 of 
\cite{CFK} we know that for any closed 2-complex $S$ one has $\tilde \mu(S) \le 1$. Therefore, given a closed 2-complex $S$, we may apply Theorem \ref{embed} to conclude that 
the probability that this $S$ embeds into a random 2-complex $Y\in G(\Delta_n^{(2)}, p)$ tends to zero as $n\to \infty$. However this would not be strong enough to prove Theorem 
\ref{thm1intro} since we need to know (as shown in the proof above) that the probability that there exists a closed 2-complex $S$ which embeds to a random 2-complex tends to zero. 

}
\end{remark}
\section{Surfaces in random 2-complexes}

In this section we apply the results of section \S \ref{secembeddings}
and study embeddings of triangulated surfaces into random 2-dimensional complexes.

\begin{definition}\label{balanced} A finite simplicial 2-complex $S$ is called balanced if $$\mu(S) = \tilde \mu(S),$$ i.e. if the quantities defined in Definitions \ref{def6} and \ref{def8} coincide. 
In other words, $S$ is balanced if $$\mu(S) \le \mu(S')$$ for any subcomplex $S'\subset S$.
\end{definition}

Definition \ref{balanced} is similar to the corresponding notion for random graphs, see \cite{JLR}. 

In this section we show that there exist many unbalanced triangulations of the disk however all closed triangulated surfaces are balanced. 
We start with the following observation.

\begin{lemma} A connected simplicial $2$-complex $S$ is balanced if and only if $\mu(S)\le \mu(S')$ for all connected subcomplexes $S'\subset S$. 
\end{lemma}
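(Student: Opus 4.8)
The plan is to prove both directions. The forward direction is immediate: if $S$ is balanced, then by Definition \ref{balanced} we have $\mu(S) \le \mu(S')$ for \emph{all} subcomplexes $S' \subset S$, and in particular for all connected subcomplexes. So the content is entirely in the reverse direction: assuming $\mu(S) \le \mu(S')$ holds for every connected subcomplex $S' \subset S$, I must deduce that $\mu(S) \le \mu(S')$ for an arbitrary (possibly disconnected) subcomplex.

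The key idea is that $\mu(S') = v_{S'}/f_{S'}$ behaves well under disjoint unions, and the minimum of the ratio over the connected pieces controls the ratio of the whole. First I would reduce to pure subcomplexes, since by Definition \ref{def8} the minimum defining $\tilde\mu$ may be taken over pure subcomplexes; a subcomplex with $f_{S'} = 0$ is excluded because $\mu$ is then undefined (or $+\infty$), so I only need to worry about subcomplexes containing at least one face. Given an arbitrary subcomplex $S'$ with $f_{S'} > 0$, I would decompose it into its connected components $S'_1, \dots, S'_m$. Discarding any components that contain no faces (these only increase the vertex count and hence increase $\mu$, so they cannot produce a smaller ratio), I may assume each $S'_i$ has $f_{S'_i} > 0$. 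Then $v_{S'} = \sum_i v_{S'_i}$ and $f_{S'} = \sum_i f_{S'_i}$, so
\begin{eqnarray*}
\mu(S') = \frac{\sum_i v_{S'_i}}{\sum_i f_{S'_i}}.
\end{eqnarray*}

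The main step is the elementary mediant inequality: a ratio of sums is at least the minimum of the individual ratios, i.e.
\begin{eqnarray*}
\frac{\sum_i v_{S'_i}}{\sum_i f_{S'_i}} \ge \min_i \frac{v_{S'_i}}{f_{S'_i}} = \min_i \mu(S'_i).
\end{eqnarray*}
Since each $S'_i$ is a connected subcomplex of $S$, the hypothesis gives $\mu(S) \le \mu(S'_i)$ for every $i$, hence $\mu(S) \le \min_i \mu(S'_i) \le \mu(S')$. As $S'$ was an arbitrary subcomplex with at least one face, this establishes $\mu(S) \le \mu(S')$ for all subcomplexes, which is exactly the balancedness condition.

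I do not expect a serious obstacle here; the only points requiring care are the bookkeeping about components with no faces (which must be excluded so that all ratios are well-defined and so that they cannot lower the overall ratio) and a clean statement of the mediant inequality with its equality-of-weights edge cases. The connectivity of $S$ itself is not actually needed for the argument beyond ensuring $\mu(S)$ is a sensible reference value, so I would note that the lemma's hypothesis that $S$ be connected is used only implicitly. The mediant inequality is the genuine mathematical content and is entirely standard.
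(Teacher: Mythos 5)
Your proof is correct and follows essentially the same route as the paper: the paper also reduces to the disjoint-union case and shows $\mu(S'_1\sqcup S'_2)\ge\min\{\mu(S'_1),\mu(S'_2)\}$ via the same mediant inequality, concluding by induction on the number of components. Your version merely handles all components at once and is slightly more careful about faceless components, which the paper leaves implicit.
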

\begin{proof}Let $S'= S'_1\sqcup S'_2$ be a disjoint union of two subcomplexes. We show that 
$$\mu(S') \ge \min\{\mu(S'_1), \mu(S'_2)\}$$
and thus $\mu(S) \le \mu(S'_i)$, where $i=1, 2$, implies $\mu(S) \le \mu(S')$. 
Let $v_i$ and $f_i$ denote the number of vertices and faces of $S'_i$, $i=1, 2$. Assume that $v_1/f_1\le v_2/f_2$. Then one easily checks 
$$\mu(S') =\frac{v_1+v_2}{f_1+f_2} \ge \frac{v_1}{f_1}=\mu(S'_1).$$
The result now follows by induction on the number of connected components of $S'$. 
\end{proof}

\begin{example} {\rm Let $S=\Sigma_g$ be a triangulated closed orientable surface of genus $g\ge 0$. Then $\chi(S) = 2-2g= v-e+f$ where $v, e, f$ denote the numbers of vertices, edges and faces in $S$ correspondingly. Each edge is contained in two faces which gives $3f=2e$ and therefore 
\begin{eqnarray}\label{orient}
\mu(\Sigma_g) = \frac{1}{2} + \frac{2-2g}{f}.
\end{eqnarray}
Similarly, if $S=N_g$ is a triangulated closed nonorientable surface of genus $g\ge 1$ then $\chi(N_g) = 2-g$ and 
\begin{eqnarray}\label{nonorient}
\mu(N_g) = \frac{1}{2} + \frac{2-g}{f}.
\end{eqnarray}
}
\end{example}
Formulae (\ref{orient}) and (\ref{nonorient}) give the following:
\begin{corollary}\label{cor13} The invariants $\mu(\Sigma_g)$ of orientable triangulated surfaces satisfy:
\begin{enumerate}
  \item $1/2< \mu(\Sigma_g)\le 1$ for $g=0$ (since $f\ge 4$);
  \item $\mu(\Sigma_g)=1/2$ for $g=1$ (the torus);
  \item $\mu(\Sigma_g)<1/2$ for $g>1$;
  \item If $f\to \infty$ (i.e. when the surface is subsequently subdivided) then $\mu(\Sigma_g)\to 1/2$.\end{enumerate}
\end{corollary}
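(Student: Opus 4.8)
The plan is to derive all four items directly from formula (\ref{orient}), namely
$$\mu(\Sigma_g) = \frac{1}{2} + \frac{2-2g}{f},$$
by reading off the sign and magnitude of the correction term $\frac{2-2g}{f}$ as $g$ and $f$ vary. Items (2), (3) and (4) are then immediate. When $g=1$ the numerator $2-2g$ vanishes, so $\mu(\Sigma_1)=1/2$ independently of $f$, giving (2). When $g>1$ we have $2-2g<0$ while $f>0$, so the correction term is strictly negative and hence $\mu(\Sigma_g)<1/2$, giving (3). For any fixed $g$, letting $f\to\infty$ sends $\frac{2-2g}{f}\to 0$, so $\mu(\Sigma_g)\to 1/2$, giving (4).

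The only item requiring an auxiliary input is (1), the case $g=0$, where $\mu(\Sigma_0)=\frac{1}{2}+\frac{2}{f}$. Since $f$ is a positive integer, the correction $2/f$ is strictly positive, so $\mu(\Sigma_0)>1/2$ at once. For the upper bound $\mu(\Sigma_0)\le 1$ I would invoke the elementary fact that every simplicial triangulation of the $2$-sphere has at least four triangles, the minimum being realized by the boundary of the $3$-simplex (the tetrahedron), with $v=4$, $e=6$, $f=4$. Substituting $f\ge 4$ yields $2/f\le 1/2$ and hence $\mu(\Sigma_0)\le 1$, with equality exactly for the tetrahedron.

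The main (and essentially only) obstacle is justifying $f\ge 4$ for the sphere. This is standard: a simplicial complex on three or fewer vertices can span at most a single triangle, whose geometric realization is a disk or its boundary circle, never $S^2$, so a triangulated sphere needs $v\ge 4$ vertices. Combining the simplicial constraint $e\le {v\choose 2}$ with the relations $3f=2e$ and $v-e+f=2$ (already used to obtain (\ref{orient})) then forces $f\ge 4$, which is all that is needed to complete item (1).
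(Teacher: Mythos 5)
Your proof is correct and follows essentially the same route as the paper, which derives all four items directly from formula (\ref{orient}) and simply cites $f\ge 4$ for the sphere. Your added justification of $f\ge 4$ is sound (though note that $v\ge 4$ together with $3f=2e$ and $v-e+f=2$ already gives $f=2v-4\ge 4$, so the constraint $e\le\binom{v}{2}$ is not actually needed).
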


\begin{corollary}\label{cor14} The invariants $\mu(N_g)$ of nonorientable triangulated surfaces satisfy:
\begin{enumerate}
  \item $1/2< \mu(N_g)\le 3/5$ for $g=1$ (since $f\ge 10$);
  \item $\mu(N_g)=1/2$ for $g=2$ (the Klein bottle);
  \item $\mu(N_g)<1/2$ for $g>2$;
  \item If $f\to \infty$ (i.e. when the surface is subsequently subdivided) then $\mu(N_g)\to 1/2$.\end{enumerate}
\end{corollary}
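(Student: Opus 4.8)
The plan is to mimic the structure of Corollary~\ref{cor13} exactly, since Corollary~\ref{cor14} is its nonorientable analogue and the underlying formula~(\ref{nonorient}) has the same shape as~(\ref{orient}). I start from $\mu(N_g) = \tfrac12 + \tfrac{2-g}{f}$, which is already established in the preceding Example. The sign of the correction term $\tfrac{2-g}{f}$ is controlled entirely by the sign of $2-g$, and since $f>0$ always, the three cases split according to whether $g<2$, $g=2$, or $g>2$.

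For items (2) and (3) the argument is immediate. When $g=2$ the term $2-g$ vanishes, so $\mu(N_2)=\tfrac12$ identically, regardless of $f$; this is the Klein bottle. When $g>2$ we have $2-g<0$, hence $\tfrac{2-g}{f}<0$ and $\mu(N_g)<\tfrac12$. Item (4) is equally routine: for fixed $g$, as the surface is barycentrically (or otherwise) subdivided we have $f\to\infty$, so the correction term $\tfrac{2-g}{f}\to 0$ and therefore $\mu(N_g)\to\tfrac12$.

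The only item requiring genuine work is item (1), the case $g=1$ (the real projective plane). Here $2-g=1>0$, so $\mu(N_1)=\tfrac12+\tfrac1f$, and the claimed bounds are $\tfrac12<\mu(N_1)\le\tfrac35$. The lower bound is trivial since $\tfrac1f>0$. The upper bound is equivalent to $\tfrac1f\le\tfrac1{10}$, i.e. to the assertion $f\ge 10$, which is exactly the parenthetical hint in the statement. Thus the entire nontrivial content reduces to verifying that \emph{any} triangulation of $\R\P^2$ has at least $10$ triangles, with equality attained (the minimal triangulation of $\R\P^2$ being the well-known $6$-vertex one derived from the icosahedron, which has $6$ vertices, $15$ edges, and $10$ faces). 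This is where I expect the main obstacle to lie, so I would argue it carefully: for a closed triangulated surface one has $3f=2e$ and $v-e+f=\chi$, whence $e=3(v-\chi)$ and $f=2(v-\chi)$; a triangulation with no multiple edges requires $\binom{v}{2}\ge e=3(v-\chi)$, giving a quadratic inequality in $v$ whose solution yields the Heawood-type lower bound $v\ge\lceil\tfrac{7+\sqrt{49-24\chi}}{2}\rceil$. For $\R\P^2$ we have $\chi=1$, so $v\ge 6$ and hence $f=2(v-1)\ge 10$, establishing $\mu(N_1)\le\tfrac35$ with equality at the minimal triangulation.

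Assembling these pieces gives the full corollary: substitute~(\ref{nonorient}) and split on the sign of $2-g$ for items (2)--(4), and invoke the minimal-triangulation bound $f\ge 10$ for the sharp upper estimate in item (1). The proof is short; the one place to be vigilant is not to assert $f\ge 10$ as self-evident but to derive it from the face--edge--vertex relations together with the no-multiple-edges constraint inherent in a simplicial complex.
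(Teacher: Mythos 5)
Your proof is correct and follows the same route as the paper: substitute formula (\ref{nonorient}) and split on the sign of $2-g$, with the only nontrivial input being the bound $f\ge 10$ for triangulations of the projective plane. The paper simply cites this last fact (Heawood, Jungerman--Ringel, Ringel), whereas you derive it from $e\le\binom{v}{2}$ and the face--edge--vertex relations; that derivation is sound and makes the argument self-contained, but it is not a different approach.
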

Here we used the well-known fact that any triangulation of the real projective plane  ${\mathbf {RP}}^2$ has $f\ge 10$ faces, see \cite{Hea}, \cite{JR}, \cite{HR}, \cite{Rin}. 

\begin{example}\label{exa2} {\rm Let $S$ be a triangulated disc. Then $\chi(S)=v-e+f=1$ and $3f=2e-e_0$ where $e_0$ is the number of edges in the boundary $\partial S$. 
Substituting $e= (3f+e_0)/2$, one obtains
\begin{eqnarray}
\mu(S) = \frac{1}{2} + \frac{e_0}{2f} + \frac{1}{f}.
\end{eqnarray}

As a specific example consider the regular $n$-gon $S$ shown on Figure \ref{fig1} left. Then $v=n+1$, $f=n$ and $$\mu(S)=1+\frac{1}{n}.$$ 

On Figure \ref{fig1} 
on the right we have 
$e_0=4$ and the number of faces $f$ equals $f=2n+4$. Thus 
$$\mu(T) = \frac{1}{2} +\frac{3}{2n+4},$$
converges to $\frac{1}{2}$ as $n\to \infty$. 
}
\end{example}
\begin{corollary} For any triangulation $S$ of the disk one has $\mu(S) >1/2$. There exist triangulations $S$ of $D^2$ with $\mu(S)$ arbitrarily close to $1/2$. 
\end{corollary}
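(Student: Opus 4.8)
The plan is to read off both assertions directly from the formula established in Example \ref{exa2}, namely
$$\mu(S) = \frac{1}{2} + \frac{e_0}{2f} + \frac{1}{f},$$
valid for any triangulation $S$ of the disk, where $f=f_S$ is the number of faces and $e_0$ is the number of edges lying on the boundary $\partial S$.

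For the first assertion I would simply invoke strict positivity. A triangulated disk has nonempty boundary whose edges form a simple closed polygonal curve, so $e_0\ge 3$; in any event $f\ge 1$, so that the term $1/f$ is strictly positive. Hence both correction terms $\frac{e_0}{2f}$ and $\frac{1}{f}$ are positive and $\mu(S)>\frac{1}{2}$ follows at once, with no estimate beyond positivity needed.

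For the second assertion the point is to exhibit a sequence of triangulations of $D^2$ in which the number of faces grows without bound while the number of boundary edges stays bounded, so that both correction terms tend to zero. The family $T$ depicted on the right of Figure \ref{fig1} already does this: there $e_0=4$ is constant while $f=2n+4\to\infty$, giving
$$\mu(T)=\frac{1}{2}+\frac{3}{2n+4}\longrightarrow \frac{1}{2}$$
as $n\to\infty$. Alternatively one may start from any fixed triangulated disk and repeatedly refine it by starring interior faces; each such stellar subdivision increases $f$ by two while leaving the boundary, and hence $e_0$, unchanged, so again $\mu\to\frac{1}{2}$.

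There is no genuine obstacle here: once the formula of Example \ref{exa2} is in hand, the first claim is merely the strict positivity of a manifestly positive quantity, and the second is the exhibition of an explicit family. The only points deserving a word of justification are that the boundary of a disk carries at least three edges and that the refinement used in the second part truly leaves the boundary untouched.
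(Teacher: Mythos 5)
Your proof is correct and matches the paper's (implicit) argument: the corollary is stated immediately after Example \ref{exa2}, and both claims are meant to be read off from the formula $\mu(S)=\frac12+\frac{e_0}{2f}+\frac1f$ together with the square-with-implanted-$n$-gon family $T$, exactly as you do. The extra remarks (that $e_0\ge 3$, though only $1/f>0$ is needed, and the alternative stellar-subdivision family) are fine but not required.
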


\begin{figure}[t]
\begin{center}
\resizebox{11cm}{4.5cm}{\includegraphics[21,424][529,626]{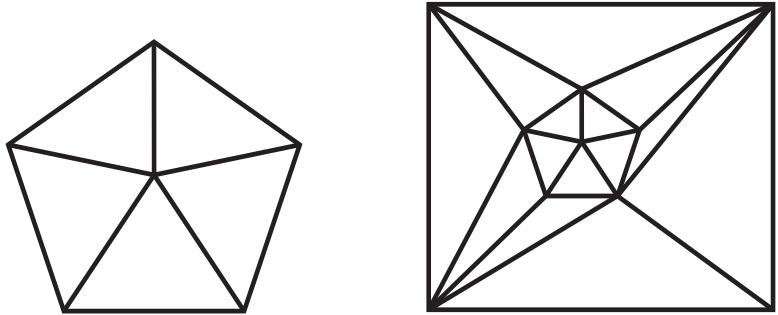}}
\end{center}
\caption{An $n$-gon $S$ (left) and a square with implanted $n$-gon $T$ (right).}\label{fig1}
\end{figure}

\begin{example}\label{exa3}{\rm Let $S'$ be such that $\mu(S')<1$ and suppose that $S$ is obtained from $S'$ by adding a triangle $\Delta$ such that $S'\cap \Delta$ is an edge. Then $S$ is not balanced. Indeed, $v_S = v_{S'} +1$ and $f_S=f_{S'}+1$ and $$\mu(S) =\frac{v_{S'}+1}{f_{S'}+1} > \frac{v_{S'}}{f_{S'}} = \mu(S').$$
}
\end{example} 

\begin{corollary} There exist unbalanced triangulations of the disk.
\end{corollary}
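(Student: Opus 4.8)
The plan is to produce an explicit family of triangulations of the disk that are provably unbalanced, and the cleanest route is to invoke Example~\ref{exa3} directly. That example already establishes the general mechanism: if $S'$ is any triangulation with $\mu(S')<1$ and we glue one extra triangle $\Delta$ along a single boundary edge, then $\mu(S)>\mu(S')$, so $S$ cannot be balanced because it contains a subcomplex (namely $S'$) of strictly smaller $\mu$. Thus the whole corollary reduces to exhibiting a \emph{single} triangulation of the disk whose $\mu$ is strictly less than $1$, together with the observation that attaching a triangle along a boundary edge keeps us inside the class of disk triangulations.

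First I would recall from Example~\ref{exa2} that disk triangulations with $\mu(S)$ arbitrarily close to $1/2$ exist; in particular the ``square with implanted $n$-gon'' complex $T$ on the right of Figure~\ref{fig1} satisfies $\mu(T)=\tfrac12+\tfrac{3}{2n+4}$, which is well below $1$ once $n$ is even modestly large. So I would fix such a $T$ with $\mu(T)<1$ to serve as the complex $S'$ of Example~\ref{exa3}. Next I would attach a triangle $\Delta$ to $T$ along one of its boundary edges $e_0\subset\partial T$, meeting $T$ in exactly that edge. The resulting complex $S=T\cup\Delta$ is again a triangulated disk, since gluing a triangle onto a boundary edge of a disk along a single edge yields a disk (the new triangle's two free edges become part of the new boundary). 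By Example~\ref{exa3}, $\mu(S)=\tfrac{v_T+1}{f_T+1}>\mu(T)$, and since $T\subset S$ we have $\tilde\mu(S)\le\mu(T)<\mu(S)$, so $S$ is unbalanced.

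The only point requiring a moment of care is the verification that $S=T\cup\Delta$ really is a triangulated disk and not some other surface-with-boundary. This is where I would be most careful, though it is not genuinely hard: attaching a $2$-simplex to a disk along a single boundary edge is an elementary expansion that preserves the homeomorphism type $D^2$, and the intersection condition $S'\cap\Delta$ being exactly one edge guarantees that the combinatorics of Example~\ref{exa3} apply verbatim (one new vertex, one new face). I do not expect any real obstacle here, since both the topological claim and the arithmetic of $\mu$ are immediate; the substance of the corollary is entirely carried by the inequality already proved in Example~\ref{exa3}.

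Alternatively, and even more economically, one can simply observe that the regular $n$-gon $S$ from Example~\ref{exa2} with $\mu(S)=1+\tfrac1n$ contains proper subcomplexes of smaller $\mu$ for suitable gluings, but the attachment construction above is the most transparent way to guarantee unbalancedness, so that is the form I would present.
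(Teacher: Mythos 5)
Your proof is correct and follows essentially the same route as the paper: start with the square with implanted $n$-gon from Example \ref{exa2} (so $\mu<1$), attach a triangle along a boundary edge, and invoke Example \ref{exa3} to get $\mu(S)>\mu(S')$, hence $\tilde\mu(S)<\mu(S)$. The extra care you take in checking that the result is still a disk is exactly the point the paper dispatches with ``clearly.''
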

\begin{proof} Start with a disk triangulation $S'$ with $\mu(S')<1$ (for instance, $S'$ can be the square with implanted $n$-gon, see Example \ref{exa2}) and add a triangle $S=S'\cup \Delta$ such that $S'\cap \Delta$ is an edge lying in the boundary $\partial S'$. Then $\mu(S) >\mu(S')$ (see Example \ref{exa3}) and $S$ is unbalanced. Clearly, $S$ is homeomorphic to the $2$-dimensional disk.
\end{proof}

\begin{theorem}\label{bal} Any closed connected triangulated surface $S$ is balanced. 
\end{theorem}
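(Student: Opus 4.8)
The plan is to reduce the statement to a single numerical inequality and then attack it with a Euler-characteristic bound for graphs embedded in $S$. By the Lemma preceding this theorem it suffices to prove $\mu(S)\le\mu(S')$ for every connected (equivalently pure) subcomplex $S'\subset S$. For $S'=S$ this is an equality, so I assume $S'\subsetneq S$ is proper; since $S$ is connected and closed, a proper pure subcomplex has at least one free edge. Write $v',e',f'$ for the numbers of vertices, edges and faces of $S'$ and let $\beta'$ be the number of free edges of $S'$. Because $S'$ lies in a surface, every edge of $S'$ meets one or two faces, so counting incidences gives $2e'=3f'+\beta'$, whence $\chi(S')=v'-\tfrac12 f'-\tfrac12\beta'$ and therefore
\[
\mu(S')=\frac{v'}{f'}=\frac12+\frac{2\chi(S')+\beta'}{2f'},\qquad \mu(S)=\frac12+\frac{\chi}{f},
\]
where $\chi=\chi(S)$ (note $\beta=0$ and $2e=3f$ for the closed surface $S$). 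Hence the theorem is equivalent to the single inequality $\;2\chi(S')+\beta'=2v'-f'\ \ge\ \dfrac{f'}{f}\,2\chi$.

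The main tool I would use is that the $1$-skeleton of $S'$ is a simple graph on $v'\ge 3$ vertices which embeds in the surface $S$. The standard Euler bound for graphs on a surface of Euler characteristic $\chi$ then gives $e'\le 3v'-3\chi$, and combined with $3f'\le 2e'$ this yields $f'\le 2v'-2\chi$, i.e. $2v'-f'\ge 2\chi$. This already settles the case $\chi(S)\ge 0$, that is $S=S^2,\ \mathbf{RP}^2$, the torus or the Klein bottle: the function $x\mapsto x/(2x-2\chi)$ is non-increasing when $\chi\ge 0$, so from $f'\le 2v'-2\chi$ and $v'\le v$ one gets
\[
\mu(S')=\frac{v'}{f'}\ \ge\ \frac{v'}{2v'-2\chi}\ \ge\ \frac{v}{2v-2\chi}=\frac{v}{f}=\mu(S).
\]

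The hard part is the case $\chi(S)<0$ (orientable genus $\ge 2$, non-orientable genus $\ge 3$), where $\mu(S)<\tfrac12$. Here the bound $2v'-f'\ge 2\chi$ is genuinely insufficient, because the target value $\tfrac{f'}{f}2\chi$ is strictly larger than $2\chi$ once $f'<f$ and $\chi<0$; the gap reflects the fact that the global bound charges $S'$ for the full genus of $S$ even though a proper subcomplex only occupies a proper subsurface. My proposed route is to localize the Euler bound. I would first reduce to the troublesome subcomplexes by noting that if $2\chi(S')+\beta'\ge 0$ then $\mu(S')\ge\tfrac12>\mu(S)$ and we are done, so I may assume $\chi(S')<0$. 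Then I would normalize $S'$: split each singular vertex into one copy per arc of its link in $S'$, producing a compact $2$-manifold with boundary $\hat S'$ with the same faces and $\chi(\hat S')\ge\chi(S')$. Strong connectivity of $S$ forces every component of $\hat S'$ to have nonempty boundary, since a closed component would be a set of faces closed under edge-adjacency, hence all of $S$; thus each component has $\chi\le 1$. Using additivity of the genus of disjoint subsurfaces of $S$ (the total genus is at most the genus of $S$) together with the complementary decomposition $\chi(S')=\chi-\chi(S'')+\chi(C)$, where $S''$ is the complementary pure subcomplex and $C=S'\cap S''$ is the cut graph, I would replace $\chi$ by the Euler characteristic of the subsurface actually occupied and thereby upgrade the estimate to $2v'-f'\ge\tfrac{f'}{f}2\chi$.

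I expect this genus-localization step to be the principal obstacle: the clean inequalities ($\Phi(S')\ge 2\chi$, and even the sharpened $\Phi(S')\ge 2\chi+\tfrac13\beta'$ coming from the slack $2e'-3f'=\beta'$) are all too weak, and only a careful accounting of how the complementary regions and the cut graph distribute the genus appears to close the gap. An alternative I would keep in reserve is a global extremal argument: among all pure subcomplexes choose one minimizing $\mu$; if it has a free edge whose opposite $2$-simplex in $S$ introduces no new vertex, adjoining that simplex strictly decreases $\mu$, a contradiction, so at a minimizer every free edge points to a vertex outside $S'$, and one tries to convert this structural restriction into the desired count $v'/f'\ge v/f$.
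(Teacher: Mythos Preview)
Your reduction to the target inequality $2\chi(S')+\beta'\ge\tfrac{f'}{f}\cdot 2\chi(S)$ is correct, and your treatment of the case $\chi(S)\ge 0$ via the surface Euler bound $e'\le 3v'-3\chi$ is clean and complete; this settles $S^2$, ${\mathbf{RP}}^2$, the torus and the Klein bottle. You then correctly flag $\chi(S)<0$ as the obstacle and leave it open.

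Your caution is well placed. The paper attacks this case with the long exact sequence of the pair $(S,S')$ together with Poincar\'e duality, $H_2(S,S';\Q)\cong H^0(S\setminus S';\Q)\cong\Q^k$ (where $k$ is the number of complementary components), obtaining $b_1(S')\le b_1(S)+k-1$ and hence, with $e_0\ge 3k$, the bound $A:=2\chi(S')+e_0\ge 2\chi(S)=:B$. It then asserts that $f\ge f'$ upgrades $A\ge B$ to $fA\ge f'B$. That last implication is invalid when $A<0$, and in fact the theorem is \emph{false} as stated. Take a $12$-vertex triangulation $S_0$ of $\Sigma_5$ (these exist by Jungerman--Ringel; here $f_0=40$), and stellar-subdivide one face $abc$ to obtain a closed surface $S$ with $v=13$, $f=42$, $\mu(S)=13/42$. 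The full subcomplex $S'\subset S$ on the twelve old vertices is exactly $S_0$ with the single face $abc$ deleted, so $v'=12$, $f'=39$ and $\mu(S')=12/39$. Since $12\cdot 42=504<507=13\cdot 39$ we get $\mu(S')<\mu(S)$, and $S$ is not balanced. One checks $b_1(S')=10=b_1(S)$, $e_0=3$, $k=1$, so the paper's inequality $A=-15\ge-16=B$ does hold, yet $fA=-630<-624=f'B$, exhibiting the gap. So the difficulty you isolated in the negative-Euler-characteristic regime is not a lacuna in your argument but in the statement itself; your proof for $\chi(S)\ge 0$ is the part that survives.
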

\begin{proof} Let $S'\subset S$ be a connected subcomplex, $S'\not=S$. We may assume that each edge of $S'$ belongs to either one or two triangles of $S'$ (since any edge which is not incident to a triangle can be simply removed without affecting $\mu(S')$). Then we have
\begin{eqnarray}\label{euler}
\chi(S') = 1-b_1(S') = v'-e'+f'
\end{eqnarray}
where $v', e', f'$ are the numbers of vertices, edges and faces in $S'$. Here we use the assumption that $S'$ is connected (i.e. $b_0(S')=1$) and $S'\not=S$ (i.e. $b_2(S')=0$). 
One may write
$$3f'=2e'-e_0$$
where $e_0$ is the number of edges incident to exactly one 2-simplex. Expressing $e'$ through $f',$ and $e_0$ and substituting into (\ref{euler})
we obtain
\begin{eqnarray}\label{sprime}
\mu(S') = \frac{1}{2} + \frac{1-b_1(S')}{f'} + \frac{e_0}{2f'}.
\end{eqnarray}
Assume first that $S$ is orientable and has genus $g$, i.e. $S=\Sigma_g$. Then we have formula (\ref{orient}) and the inequality $\mu(S') \ge \mu(S)$ is equivalent to 
\begin{eqnarray*}\nonumber
\frac{1-b_1(S')}{f'} + \frac{e_0}{2f'} &\ge& \frac{2-2g}{f}\end{eqnarray*}
or 
\begin{eqnarray*} 
f[2-2b_1(S') +e_0]&\ge& (4-4g)f',
\end{eqnarray*}
where $f$ denotes the number of 2-simplexes in $S$. Since $f\ge f'$ the above inequality follows from 
\begin{eqnarray*}
2- 2b_1(S') +e_0 \ge 4- 4g.
\end{eqnarray*}
Since $b_1(S)=2g$ the latter inequality is equivalent to 
\begin{eqnarray}\label{target}
b_1(S') \le b_1(S) + e_0/2 -1.
\end{eqnarray}
The homological exact sequence of $(S, S')$ has the form
\begin{eqnarray*}
0\to H_2(S;\Q) &\to& H_2(S, S';\Q)\stackrel{j_\ast} \to H_1(S';\Q)\\  &\to& H_1(S;\Q) \to H_1(S, S';\Q)\to 0.\end{eqnarray*}
Here $H_2(S;\Q)=\Q$ and by the Poincar\'e duality theorem (see \cite{Hat}, Proposition 3.46) 
\begin{eqnarray}\label{duality}H_2(S,S';\Q) \simeq H^0(S-S';\Q)\end{eqnarray}
has dimension equal to the number $k$ of path-connected components of the complement $S-S'$. Formally, we find a compact deformation retract $K\subset S-S'$ such that $S-K$ deformation retracts onto $S'$ and apply Proposition 3.46 from \cite{Hat} to it; thus we obtain (\ref{duality}). 

It follows that the image of $j_\ast$ has dimension $k-1$ and therefore 
the long exact sequence implies 
\begin{eqnarray}\label{target1}
b_1(S) \ge b_1(S') - k+1.
\end{eqnarray}
Each of the connected components of the complement $S-S'$ is bounded by a simple polygonal curve having at least 3 edges. Therefore, we see that 
\begin{eqnarray}\label{three}
e_0\ge 3k
\end{eqnarray} 
and now (\ref{target}) follows from (\ref{target1}). 

Consider now the case when the surface $S$ is nonorientable, $S=N_g$. In this case the arguments are similar but we will consider the homology groups with coefficients in $\Z_2$ and the $\Z_2$-Betti numbers which we will denote $$b'_i(X)= \dim H_i(X; \Z_2).$$ Comparing $\mu(S')$ given by (\ref{sprime}) and $\mu(S)$ given by (\ref{nonorient}) and taking into account the equality
$$b_1'(S) =g,$$ we see that the inequality $\mu(S') \ge \mu(S)$ is equivalent to 
\begin{eqnarray}\label{target2}
b'_1(S') \le b'_1(S) + e_0/2 -1,
\end{eqnarray}
which is analogous to (\ref{target}). The inequality (\ref{target2}) follows from arguments similar to the ones given above with $\Z_2$ coefficients replacing the rationals $\Q$, using the Poincar\'e duality and the inequality (\ref{three}). 
\end{proof}

%
%

In the following statement we consider {\it \lq\lq small surfaces\rq\rq}, i.e. triangulated surfaces which do not depend on $n$. 
Theorems \ref{embed},  \ref{bal} and Corollaries \ref{cor13} and \ref{cor14} imply: 

\begin{corollary} One has:
\begin{enumerate}
\item If $p\ll n^{-1}$ then a random 2-complex $Y\in G(\Delta_n^{(2)}, p)$ contains\footnote{In this Corollary the word \lq\lq contains\rq\rq\, means \lq\lq contains as a simplicial subcomplex.\rq\rq} no small\footnote{In this statement one may remove the word \lq\lq small\rq\rq\, as follows from the proof of Theorem \ref{thm1intro}.}
 closed surfaces, a.a.s. 
\item If $n^{-1}\ll p\ll n^{-3/5}$ then a random 2-complex $Y$ contains small spheres but no small closed surfaces of other topological types, a.a.s.
\item If $n^{-3/5}\ll p\ll n^{-1/2}$ then a random 2-complex $Y$ contains small spheres and projective planes but no small closed surfaces of higher genera, a.a.s.
\item If $p\gg n^{-1/2}$ then a random 2-complex $Y$ contains all small spheres, projective planes, tori and Klein bottles, a.a.s.
\item If $p\gg n^{-1/2+\epsilon}$ for some $\epsilon >0$ then, given a topological type of a closed surface, there exists $f_0=f_0(\epsilon)$, such that any triangulation of the surface 
having more than $f_0$ 2-simplexes will be simplicially embeddable into a random 2-complex $Y$, a.a.s. In particular, if $p\gg n^{-1/2+\epsilon}$, 
a random 2-complex $Y$ contains small closed orientable and nonorientable surfaces of all possible topological types, a.a.s.
\end{enumerate}
\end{corollary}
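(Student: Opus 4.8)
The unifying observation is that every closed surface of a given topological type is realised by \emph{connected} triangulations, so Theorem~\ref{bal} applies and each such triangulation $S$ is balanced, i.e.\ $\tilde\mu(S)=\mu(S)$. Consequently Theorem~\ref{embed} collapses to a sharp threshold keyed to the single number $\mu(S)$: if $p\ll n^{-\mu(S)}$ then $S$ admits no simplicial immersion, and hence (every embedding being an immersion) no embedding, a.a.s.; while if $p\gg n^{-\mu(S)}$ then $S$ embeds a.a.s. The plan is to feed into this dichotomy the values of $\mu$ recorded in Corollaries~\ref{cor13} and~\ref{cor14}. Writing $\mu(S)=\tfrac12+\chi(S)/f$ for a closed surface with $f$ faces, the facts I would use are: over triangulations of $S^2$ the quantity $\mu$ lies in $(\tfrac12,1]$, with maximum $1$ at the tetrahedron ($f=4$); over $\mathbf{RP}^2$ it lies in $(\tfrac12,\tfrac35]$, with maximum $\tfrac35$ at the $10$-face triangulation; every triangulation of the torus or Klein bottle has $\mu=\tfrac12$ exactly; and every triangulation of a surface of higher genus has $\mu<\tfrac12$.

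For the \textbf{existence} assertions I would exhibit, for each claimed type, one triangulation whose threshold the given range of $p$ clears, and invoke Theorem~\ref{embed}(B). A sphere is supplied by the tetrahedron (threshold $n^{-1}$), so it embeds once $p\gg n^{-1}$, covering parts 2--4; a projective plane is supplied by the minimal $10$-face triangulation (threshold $n^{-3/5}$), so it embeds once $p\gg n^{-3/5}$, covering parts 3--4. For part 4 I would note that every torus and Klein-bottle triangulation has threshold exactly $n^{-1/2}$, while every sphere and $\mathbf{RP}^2$ triangulation has $\mu>\tfrac12$ and hence threshold $o(n^{-1/2})$; thus \emph{all} triangulations of these four types embed once $p\gg n^{-1/2}$, the only point to verify being the comparison $p\gg n^{-1/2}\gg n^{-\mu(S)}$ valid whenever $\mu(S)>\tfrac12$.

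For the \textbf{non-existence} assertions I read each claim per fixed triangulation of the excluded type (for each such triangulation, a.a.s.\ it is not contained), which is precisely what Theorem~\ref{embed}(A) delivers. The mechanism is a uniform upper bound on $\mu$ across a topological type. Since every triangulation of $\mathbf{RP}^2$ has $\mu\le\tfrac35$, the range $p\ll n^{-3/5}$ forces $pn^{\mu(S)}\le pn^{3/5}\to0$ and rules all of them out (the torus, Klein bottle, and higher-genus types having even smaller $\mu$ are excluded a fortiori), giving part 2. Since every torus, Klein-bottle and higher-genus triangulation has $\mu\le\tfrac12$, the range $p\ll n^{-1/2}$ excludes all of these, giving part 3. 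And since every closed-surface triangulation has $\mu\le1$, the range $p\ll n^{-1}$ excludes every closed surface, giving part 1 (the sphere threshold $n^{-1}$ being the critical one here).

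Finally, for part 5, from $\mu(S)=\tfrac12-|\chi(S)|/f$ for a fixed type with $\chi(S)<0$ I would set $f_0(\epsilon)=|\chi(S)|/\epsilon$; then any triangulation with $f>f_0$ satisfies $\mu(S)>\tfrac12-\epsilon$, so its threshold $n^{-\mu(S)}$ is $o(n^{-1/2+\epsilon})$ and Theorem~\ref{embed}(B) yields an embedding a.a.s.\ whenever $p\gg n^{-1/2+\epsilon}$ (for types with $\chi\ge0$ one has $\mu\ge\tfrac12$ for every triangulation, so $f_0=0$ works), and the ``in particular'' clause follows since each topological type admits arbitrarily fine triangulations. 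The step I expect to demand the most care is exactly this uniformity in part 5: producing a single $f_0$, independent of the individual triangulation, that turns $p\gg n^{-1/2+\epsilon}$ into the strict comparison $p\gg n^{-\mu(S)}$ --- together with keeping the two logical readings straight throughout, namely that a ``contains'' claim needs only one good triangulation whereas a ``contains no'' claim must rule out every triangulation of the type.
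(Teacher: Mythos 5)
Your proposal is correct and follows essentially the same route as the paper, which derives the corollary directly from Theorem~\ref{bal} (balancedness, so $\tilde\mu=\mu$), Theorem~\ref{embed}, and the formulae (\ref{orient}), (\ref{nonorient}) as recorded in Corollaries~\ref{cor13} and~\ref{cor14}. You have simply spelled out the case analysis (and the choice $f_0=|\chi(S)|/\epsilon$ in part~5) that the paper leaves implicit, and your reading of the existential versus universal senses of ``contains'' matches the intended one.
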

\begin{proof} These statements follow from Theorem \ref{bal} and formulae (\ref{orient}) and (\ref{nonorient}). 
\end{proof}
The statement 5 of the previous Corollary can be compared with Theorem \ref{thm2intro} which deals with topological embeddings. 

\begin{corollary} For a random 2-complex $Y\in G(\Delta_n^{(2)}, p)$ with $p\gg n^{-1}$
one has \begin{eqnarray}\pi_2(Y) \not=0,\quad \mbox{and}\quad H_2(Y;\Z) \not=0\end{eqnarray} 
a.a.s. 
\end{corollary}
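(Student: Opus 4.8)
The plan is to deduce the final Corollary from the results already established about embeddings of closed surfaces. The statement asserts that for $p\gg n^{-1}$ a random 2-complex $Y$ satisfies $\pi_2(Y)\neq 0$ and $H_2(Y;\Z)\neq 0$ a.a.s. The natural strategy is to produce, with high probability, a closed orientable triangulated surface $S$ inside $Y$ as a simplicial subcomplex, and to then observe that such a surface forces both nonvanishing statements.

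First I would select a convenient closed surface to embed. The torus $\Sigma_1$ is the ideal candidate: by the Example following Definition \ref{balanced}, formula (\ref{orient}) gives $\mu(\Sigma_1)=1/2$, and by Theorem \ref{bal} every closed triangulated surface is balanced, so $\tilde\mu(\Sigma_1)=\mu(\Sigma_1)=1/2$. Fixing one explicit small triangulation $S$ of the torus, which by hypothesis is independent of $n$, I would like to invoke Theorem \ref{embed}(B) to conclude that $S$ embeds into $Y$ a.a.s. However, Theorem \ref{embed}(B) requires $p\gg n^{-\tilde\mu(S)}=n^{-1/2}$, which is a stronger hypothesis than $p\gg n^{-1}$. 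The cleanest fix is therefore to embed a closed surface of the highest available genus instead: by Corollary \ref{cor13}, orientable surfaces $\Sigma_g$ with $g>1$ satisfy $\mu(\Sigma_g)<1/2$, and as $f\to\infty$ one has $\mu(\Sigma_g)\to 1/2$ from below. Thus for any target exponent just below $1/2$ one can find a closed surface with $\tilde\mu(S)$ as small as desired. To cover the full range $p\gg n^{-1}$ I would argue as follows: given a sequence $p=p(n)$ with $pn\to\infty$, I may pass to the regime and exhibit, for each fixed small surface with $\tilde\mu(S)<1$, an embedding whenever $p\gg n^{-\tilde\mu(S)}$; choosing a closed surface with $\tilde\mu(S)$ small enough that $n^{-\tilde\mu(S)}\ll p$ holds, Theorem \ref{embed}(B) then yields a simplicial embedding $S\hookrightarrow Y$ a.a.s.

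Second, having placed a closed orientable surface $S\subset Y$ as a subcomplex, I would extract the topological conclusions. The fundamental class of $S$ gives a nonzero element of $H_2(S;\Z)\cong\Z$; since $S$ has no 3-cells and is a subcomplex of $Y$, this class is a genuine $2$-cycle in the simplicial chain complex of $Y$ that is not a boundary (there being no $3$-simplices in a $2$-complex), whence $H_2(Y;\Z)\neq 0$. For $\pi_2$, I would apply the Hurewicz-type reasoning through the universal cover, or more directly note that a closed orientable surface of positive genus carries an essential map detecting a nontrivial element of $\pi_2(Y)$: the inclusion $S\hookrightarrow Y$ together with the nonvanishing of $H_2$ and the fact that $\pi_2(Y)\cong H_2(\widetilde Y;\Z)$ for the universal cover $\widetilde Y$ gives $\pi_2(Y)\neq 0$ once one checks the relevant Hurewicz map is nonzero.

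The main obstacle is the mismatch between the hypothesis $p\gg n^{-1}$ and the threshold $n^{-1/2}$ governing closed surfaces: no single fixed triangulation has $\tilde\mu$ close enough to $1$ to be embeddable for all $p\gg n^{-1}$. The resolution rests squarely on Corollary \ref{cor13}(4), that $\mu(\Sigma_g)\to 1/2$ is the limiting behaviour but that taking high genus drives $\mu$ strictly below $1/2$; one must verify that for any prescribed $p$ with $pn\to\infty$ there is a closed surface triangulation whose invariant $\tilde\mu$ is small enough to satisfy the hypothesis of Theorem \ref{embed}(B), and this is where I expect the argument to require the most care, since $S$ must remain fixed (independent of $n$) while its $\tilde\mu$ must undercut the given exponent. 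The secondary care point is the passage from a nonzero simplicial $2$-cycle to nonvanishing of $\pi_2$, which is standard but should be stated with the correct identification $\pi_2(Y)\cong H_2(\widetilde Y;\Z)$.
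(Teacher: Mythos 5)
There is a genuine gap, and it is a sign error in the logic of thresholds. Theorem \ref{embed}(B) requires $p\gg n^{-\tilde\mu(S)}$; since $n^{-\tilde\mu(S)}$ \emph{increases} as $\tilde\mu(S)$ decreases, a surface with \emph{smaller} $\tilde\mu$ needs a \emph{larger} $p$ to embed, not a smaller one. Your proposed ``fix'' --- passing to surfaces of high genus, for which $\mu(\Sigma_g)<1/2$ by Corollary \ref{cor13} --- therefore moves in exactly the wrong direction: such surfaces have embedding threshold above $n^{-1/2}$, even further from the hypothesis $p\gg n^{-1}$ than the torus is. Your stated obstacle, that ``no single fixed triangulation has $\tilde\mu$ close enough to $1$,'' is also false: the tetrahedron (the minimal triangulation of $S^2$, with $v=f=4$) has $\mu=1$, and by Theorem \ref{bal} it is balanced, so $\tilde\mu=1$. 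This is precisely what the paper uses: $p\gg n^{-1}=n^{-\tilde\mu(\partial\Delta_3)}$ is exactly the hypothesis of Theorem \ref{embed}(B), so the tetrahedron embeds simplicially a.a.s.; its fundamental class is a $2$-cycle which cannot bound (there are no $3$-simplexes), giving $H_2(Y;\Z)\neq 0$, and the same subcomplex is a sphere in $Y$ whose Hurewicz image is that nonzero class, giving $\pi_2(Y)\neq 0$.

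A secondary weakness: even if you could embed a torus or a higher-genus surface, your deduction of $\pi_2(Y)\neq 0$ from it is not justified. A closed aspherical surface subcomplex carries no element of $\pi_2$ in any obvious way (its own $\pi_2$ vanishes, and its fundamental class need not lift to a spherical class in $Y$); the identification $\pi_2(Y)\cong H_2(\widetilde Y;\Z)$ does not by itself produce a nonzero element. Using the sphere avoids this entirely, since the inclusion $S^2\hookrightarrow Y$ is itself an element of $\pi_2(Y)$ detected by the ordinary Hurewicz map.
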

\begin{proof} Indeed, by the previous Corollary, for $p\ll n^{-1}$ a random 2-complex $Y$ contains a tetrahedron as a simplicial subcomplex. The fundamental class of this tetrahedron gives a nontrivial element of $H_2(Y)$. The tetrahedron can also be viewed as a sphere in $Y$ representing a nontrivial class in $\pi_2(Y)$. 
\end{proof}

The statement $H_2(Y;\Z)\not=0$ also follows from Theorem \ref{thm2} and from the result of D. Kozlov \cite{DK}.

\section{Remarks concerning the invariant $\mu(S)$} 

First we observe that $\mu(S)$ admits the following curious interpretation. 

For each vertex $u_i \in V(S)$ its degree $\deg(u_i)$ is defined as the number of edges incident to $u_i$. 
For an edge $e_i \in E(S)$ the degree $\deg(e_i)$ is defined as the number of two-dimensional simplexes incident to $e_i$. 
Next we define {\it the average vertex degree} and {\it the average edge degree} by the formulae
$$D_v(S)= v^{-1}\cdot \sum_{u_i\in V(S)} \deg (u_i), \quad D_e(S)= e^{-1}\cdot \sum_{e_i\in E(S)} \deg (e_i).$$
\begin{lemma} For any 2-complex $S$ one has
$$\mu(S)\cdot D_v(S) \cdot D_e(S)= 6.$$
\end{lemma}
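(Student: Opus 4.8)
The plan is to prove the identity
$$\mu(S)\cdot D_v(S)\cdot D_e(S)=6$$
by expressing each of the three factors in terms of the face-vertex-edge counts $v, e, f$ of $S$ and observing that almost everything cancels. The key structural fact I would exploit is that each of the two ``degree sums'' appearing in $D_v$ and $D_e$ is a double-counting quantity that evaluates to something simple.

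\medskip

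\noindent\textbf{Step 1: Compute the two degree sums.} First I would observe that
$$\sum_{u_i\in V(S)}\deg(u_i)=2e,$$
since this sum counts each edge exactly twice, once for each of its two endpoints (this is the familiar handshake lemma for the $1$-skeleton of $S$). Therefore $D_v(S)=v^{-1}\cdot 2e=2e/v$. Next I would observe that
$$\sum_{e_i\in E(S)}\deg(e_i)=3f,$$
since summing over edges the number of incident $2$-simplexes is the same as summing over $2$-simplexes the number of their edges, and each $2$-simplex is a triangle with exactly three edges. Hence $D_e(S)=e^{-1}\cdot 3f=3f/e$.

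\medskip

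\noindent\textbf{Step 2: Multiply and simplify.} Recalling that $\mu(S)=v/f$ by Definition \ref{def6}, I would simply form the product
$$\mu(S)\cdot D_v(S)\cdot D_e(S)=\frac{v}{f}\cdot\frac{2e}{v}\cdot\frac{3f}{e}.$$
Here the factor $v$ cancels between $\mu(S)$ and $D_v(S)$, the factor $f$ cancels between $\mu(S)$ and $D_e(S)$, and the factor $e$ cancels between $D_v(S)$ and $D_e(S)$, leaving exactly $2\cdot 3=6$. This completes the proof.

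\medskip

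\noindent I do not anticipate a genuine obstacle here, as the statement is a purely combinatorial identity and the computation is a triple telescoping cancellation. The only point requiring a small amount of care is making sure the two double-counting identities of Step 1 are stated correctly and that no vertex, edge, or face is counted with an unintended multiplicity; in particular one should note that the identity holds for an arbitrary $2$-complex $S$ regardless of its topology, since both counting arguments are purely local and use only that faces are triangles and edges have two endpoints. A minor caveat worth flagging is that the formula presupposes $v, e, f\neq 0$ so that the averages $D_v, D_e$ and the ratio $\mu$ are all defined; for any genuine $2$-complex with at least one face this is automatic.
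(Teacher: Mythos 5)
Your proof is correct and is essentially identical to the paper's: both rest on the two double-counting identities $\sum_{u_i\in V(S)}\deg(u_i)=2e$ and $\sum_{e_i\in E(S)}\deg(e_i)=3f$, followed by the telescoping cancellation in the product $\frac{v}{f}\cdot\frac{2e}{v}\cdot\frac{3f}{e}=6$. No further comment is needed.
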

\begin{proof}
The statement follows from the definition $$\mu(S)=v/f= 6\cdot \frac{v}{2e}\cdot \frac{e}{3f}$$ using the following obvious formulae
$$3f=\sum_{e_i\in E(S)}\deg(e_i), \quad 2e= \sum_{u_i\in V(S)} \deg(u_i).$$
\end{proof}

\begin{lemma} For any strongly connected 2-complex $S$ one has
\begin{eqnarray}\label{ineqarm}\mu(S) \le 1 + \frac{2}{f},\end{eqnarray}
where $f=f_S$ is the number of faces in $S$. 
\end{lemma}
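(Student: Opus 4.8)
The plan is to prove the bound $\mu(S) \le 1 + 2/f$ for any strongly connected 2-complex $S$ by relating the number of vertices $v$ and faces $f$ through the strong connectivity hypothesis, which forces the 2-simplexes to be glued together along edges in a connected fashion. The key idea is that strong connectivity means we can enumerate the $f$ faces of $S$ as $\sigma_1, \sigma_2, \dots, \sigma_f$ in such an order that each $\sigma_{i+1}$ (for $i \ge 1$) is adjacent to at least one of the preceding faces $\sigma_1, \dots, \sigma_i$, i.e. shares an edge with some earlier face. This is analogous to building up a connected graph by adding one edge at a time so that connectivity is maintained at every stage; strong connectivity of the 2-complex is precisely what allows such an ordering of the 2-simplexes.

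First I would set up this ordering and track how many \emph{new} vertices each face contributes. The opening face $\sigma_1$ has $3$ vertices, all of which are new, contributing $3$ to the vertex count. For each subsequent face $\sigma_{i+1}$, since it is adjacent to some earlier face along an edge, it shares at least two vertices (the two endpoints of that shared edge) with the union of the previous faces. Consequently $\sigma_{i+1}$ introduces at most one new vertex. Summing over all faces, the total number of vertices satisfies
\begin{eqnarray*}
v \le 3 + (f-1)\cdot 1 = f + 2.
\end{eqnarray*}
Dividing by $f$ then gives exactly
\begin{eqnarray*}
\mu(S) = \frac{v}{f} \le \frac{f+2}{f} = 1 + \frac{2}{f},
\end{eqnarray*}
which is the desired inequality.

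The step I expect to be the main obstacle is justifying rigorously that such an adjacency-respecting ordering of the $f$ faces always exists for a strongly connected $S$. The definition of strong connectivity in the excerpt is that the diameter $\mathrm{diam}(S)$ is finite, meaning any two 2-simplexes are joined by a chain of pairwise-adjacent faces. The existence of the ordering should follow by a standard greedy/spanning-tree argument on the ``dual'' adjacency graph whose nodes are the 2-simplexes and whose edges record adjacency along an edge of $S$: finite diameter means this dual graph is connected, and any connected graph admits a vertex ordering in which each vertex after the first is adjacent to an earlier one (obtained, for instance, from a breadth-first or depth-first traversal, or equivalently from a spanning tree rooted at $\sigma_1$). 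Once this combinatorial lemma about connected graphs is invoked, the vertex-counting bookkeeping above is entirely routine, so the essential content of the proof lies in reducing the geometric statement to this elementary graph-theoretic fact and in the observation that adjacency along an edge forces at least two shared vertices.
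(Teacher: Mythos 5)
Your argument is correct and is essentially the paper's own proof: the paper likewise uses strong connectivity to build a filtration $T_1\subset\dots\subset T_f=S$ adding one 2-simplex at a time so that each new simplex meets the previous subcomplex in an edge, whence $v\le f+2$ and $\mu(S)\le 1+2/f$. The only detail worth adding is the paper's opening remark that one may assume $S$ is pure (so that every vertex lies in some 2-simplex and your face-by-face vertex count accounts for all of $v$).
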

\begin{proof} Without loss of generality we may assume that $S$ is pure; otherwise we apply the arguments below to the pure part of $S$.

Given a pure strongly connected 2-complex $S$, there exists a sequence of subcomplexes $T_1\subset T_2 \subset \dots \subset T_f=S$ such that 
(a) each $T_i$ has exactly $i$ faces, i.e. $f_{T_i}=i$, and (b) 
the subcomplex $T_{i+1}$ is obtained from $T_i$ by adding a single 2-simplex $\sigma_i$ with the property that the intersection $\sigma_i\cap T_i$ contains an edge of $\sigma_i$. 
If $v_i$ denotes the number of vertices of $T_i$ then $v_{i+1}\le v_i+1$. Since $v_1=3$, it follows that $v=v_f\le f+2$ implying (\ref{ineqarm}). 
\end{proof}

\begin{corollary}
Suppose that a 2-complex $S=S_1\cup S_2$ is the union of two strongly connected subcomplexes such that the intersection $S_1\cap S_2$ is at most one-dimensional.
(a) If  $S_1\cap S_2$ contains at least $4$ vertices then $\mu(S) \le 1$. (b) If the intersection $S_1\cap S_2$ contains $\ge 5$ vertices then $\mu(S) <1$. 
\end{corollary}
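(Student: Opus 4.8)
The plan is to reduce everything to the estimate $v_{S'} \le f_{S'} + 2$ valid for every strongly connected 2-complex $S'$, which is exactly the content of the preceding lemma (inequality (\ref{ineqarm})), and then to combine the two pieces $S_1$ and $S_2$ by inclusion--exclusion on the vertex and face counts.

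First I would fix notation, writing $v_i = v_{S_i}$ and $f_i = f_{S_i}$ for $i=1,2$, and letting $c$ denote the number of vertices of the intersection $S_1\cap S_2$. The key observation about the face count is that, since $S_1\cap S_2$ is at most one-dimensional, it contains no 2-simplex; hence no face is shared between $S_1$ and $S_2$, and the face count is purely additive, $f_S = f_1 + f_2$. For the vertices, ordinary inclusion--exclusion gives $v_S = v_1 + v_2 - c$, because every vertex lying in both $S_1$ and $S_2$ belongs to the intersection subcomplex and so is counted among the $c$ overlap vertices.

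Next I would apply inequality (\ref{ineqarm}) separately to $S_1$ and $S_2$ (both strongly connected by hypothesis), obtaining $v_1 \le f_1 + 2$ and $v_2 \le f_2 + 2$. Substituting these into the vertex identity yields the single master estimate $v_S = v_1 + v_2 - c \le (f_1 + f_2) + 4 - c = f_S + 4 - c$. Part (a) is then immediate: if $c \ge 4$ then $v_S \le f_S$, i.e. $\mu(S) = v_S/f_S \le 1$. Part (b) is the same computation with one more overlap vertex: if $c \ge 5$ then $v_S \le f_S - 1 < f_S$, so $\mu(S) < 1$.

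I do not anticipate a real obstacle here; the argument is short and the only points needing care are bookkeeping ones. The first is to confirm that the hypothesis of an at most one-dimensional intersection genuinely forces additivity of the face count $f_S = f_1 + f_2$, since this additivity is what makes the master estimate work. The second is to ensure that inequality (\ref{ineqarm}) is legitimately applicable to each $S_i$, namely that each $S_i$ is strongly connected with at least one 2-simplex so that $\mu(S_i)$ is defined; this is part of the standing hypotheses. Once these are verified, both parts (a) and (b) follow at once from the estimate $v_S \le f_S + 4 - c$.
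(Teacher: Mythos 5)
Your proposal is correct and is essentially identical to the paper's own argument: the paper likewise writes $\mu(S)=\frac{v_1+v_2-v_0}{f_1+f_2}$ by inclusion--exclusion on vertices and additivity of faces, applies the preceding lemma's bound $v_i\le f_i+2$ to each strongly connected piece, and reads off both parts from $\mu(S)\le 1+\frac{4-v_0}{f_1+f_2}$. No further comment is needed.
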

\begin{proof} Denote $v_i=v_{S_i}$, $f_i=f_{S_i}$, where $i=1, 2$ and, as usual, $v=v_S$, $f=f_S$.  By the previous Lemma, $v_i \le f_i+2$, and thus we obtain
\begin{eqnarray}\mu(S) &=& \frac{v_1+v_2 -v_0}{f_1+f_2} \le \frac{f_1+2 +f_2+2 - v_0}{f_1+f_2}\\
 &= &1 + \frac{4-v_0}{f_1+f_2},\end{eqnarray}
where $v_0$ is the number of vertices lying in the intersection $S_1\cap S_2$. Thus, $\mu(S)\le 1$ if $v_0\ge 4$ and $\mu(S)<1$ if $v_0>4$. 
\end{proof}

\begin{lemma} Let $S$ be a connected, pure, closed (i.e. $\partial S=\emptyset$) 2-complex with $\chi(S)=1$ having at least 3 edges of degree $\ge 3$. 
Then 
\begin{eqnarray}\label{three3}
\mu(S) \le \frac{1}{2}-\frac{1}{2f},\end{eqnarray}
where $f=f_S$ is the number of faces. 
\end{lemma}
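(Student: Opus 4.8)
The plan is to exploit the Euler-characteristic relation for a pure closed 2-complex together with careful bookkeeping on edge degrees. Since $S$ is pure, closed, and connected with $\chi(S)=1$, I would first write $\chi(S)=v-e+f=1$, so $v=1+e-f$ and hence
\begin{eqnarray*}
\mu(S)=\frac{v}{f}=\frac{1+e-f}{f}=\frac{1}{f}+\frac{e}{f}-1.
\end{eqnarray*}
This reduces the whole statement to controlling the ratio $e/f$ from above. The key is that in a pure closed complex every edge has degree at least $2$, and we are told at least three edges have degree $\ge 3$.

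**The edge-degree count.** Next I would use the incidence identity $3f=\sum_{e_i\in E(S)}\deg(e_i)$, exactly as in the earlier lemma relating $\mu$ to average degrees. Because $S$ is closed and pure, every edge is incident to at least two faces, so $\deg(e_i)\ge 2$ for all edges; moreover at least three distinguished edges satisfy $\deg(e_i)\ge 3$. Splitting the sum into these three edges and the rest gives
\begin{eqnarray*}
3f=\sum_{e_i}\deg(e_i)\ge 3\cdot 3 + 2\cdot(e-3)=2e+3,
\end{eqnarray*}
which rearranges to $e\le \tfrac{3f-3}{2}$, i.e. $\tfrac{e}{f}\le \tfrac{3}{2}-\tfrac{3}{2f}$. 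Substituting this bound into the expression for $\mu(S)$ obtained above yields
\begin{eqnarray*}
\mu(S)=\frac{1}{f}+\frac{e}{f}-1\le \frac{1}{f}+\frac{3}{2}-\frac{3}{2f}-1=\frac{1}{2}-\frac{1}{2f},
\end{eqnarray*}
which is precisely the claimed inequality (\ref{three3}).

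**Anticipated obstacles and caveats.** The main thing to check carefully is the hypothesis that $\partial S=\emptyset$ together with purity genuinely forces $\deg(e_i)\ge 2$ for every edge: an edge of degree $1$ would be a free edge and hence lie in $\partial S$, contradicting closedness, while purity guarantees there are no isolated edges of degree $0$ to spoil the count. With those two hypotheses in hand the degree lower bound is automatic, and the three special edges of degree $\ge 3$ are exactly what upgrades the generic estimate $3f\ge 2e$ into the strict surplus $3f\ge 2e+3$. I expect essentially no analytic difficulty here; the only real care is in making sure the edges being counted are genuine edges of the pure complex (so that the identity $3f=\sum\deg(e_i)$ holds with no stray low-degree or boundary edges), and in confirming that $\chi(S)=1$ is the hypothesis actually used, rather than any stronger topological assumption. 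No additional results beyond the incidence formula $3f=\sum_{e_i}\deg(e_i)$ and the definition $\mu(S)=v/f$ are needed.
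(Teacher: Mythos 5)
Your proof is correct and follows essentially the same route as the paper: both combine $\chi(S)=v-e+f=1$ with the incidence count $3f=\sum_i\deg(e_i)\ge 2e+3$ (using closedness and purity to get $\deg\ge 2$ everywhere and the three distinguished edges to gain the $+3$), then substitute to get $\mu(S)\le\frac12-\frac1{2f}$. No gaps.
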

\begin{proof} We have 
\begin{eqnarray}\label{one1}v-e+f=1
\end{eqnarray}(since $\chi(S)=1$)
 and \begin{eqnarray}\label{two2}3f\ge 2e+3.
\end{eqnarray}
The last inequality follows from the formula
$$3f= 2e +e_3+e_4+\dots$$
where $e_r$ denotes the number of edges of degree at least $r$ in $S$ with $r=3, 4, \dots$. From (\ref{one1}) and (\ref{two2}) we obtain
$v\le \frac{f}{2} - \frac{1}{2}$ implying (\ref{three3}).
\end{proof}

An example of a 2-complex satisfying the condition of the previous Lemma is the house with two rooms, see  \cite{Hat}, page 4.

\section{Topological embeddings: proof of Theorem \ref{thm2intro}}
%
\begin{proof}
We show that there exists a subdivision of $S$ which simplicially embeds into $Y$ a.a.s. 

We subdivide $S$ by introducing a new vertex in the center of each 2-simplex and connecting it to three vertices, as shown on Figure \ref{subdivide}.
\begin{figure}[h]
\begin{center}
\resizebox{9cm}{4cm}{\includegraphics[7,439][545,630]{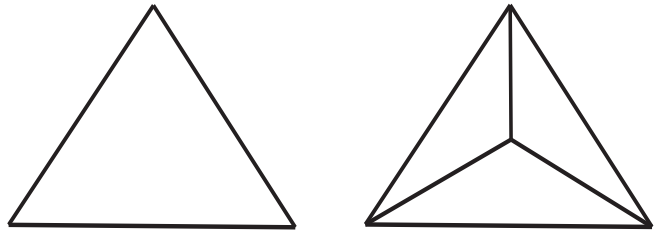}}
\end{center}
\caption{A $2$-simplex (left) and its subdivision (right).}\label{subdivide}
\end{figure}
We denote by $S'$ the new triangulation. Let $v, f$ and $v', f'$ denote the numbers of vertices and faces of $S$ and $S'$ respectively. 
Then clearly
$$v'=v+f, \quad f'=3f.$$
Therefore we find that
\begin{eqnarray}\label{sub}
\mu(S')-\frac{1}{2} = \frac{1}{3}\left(\mu(S) -\frac{1}{2}\right).
\end{eqnarray}
We claim that a similar formula holds for $\tilde \mu$, i.e. 
\begin{eqnarray}\label{subtilda}
\tilde \mu(S')-\frac{1}{2} = \frac{1}{3}\left(\tilde \mu(S) -\frac{1}{2}\right).
\end{eqnarray}
Indeed, let $T\subset S$ be a subcomplex. Then its subdivision $T'$ (defined as explained above) is a subcomplex of $S'$, and the numbers $\mu(T)$ and $\mu(T')$ are related by the equation (\ref{sub}). We show below that
\begin{eqnarray}\label{enough}
\tilde \mu(S') = \min_{T\subset S} \mu(T').
\end{eqnarray}
Clearly, (\ref{enough}) implies 
$$
\tilde \mu(S') = \min_{T\subset S} \left[\frac{1}{3} \left(\mu(T)-\frac{1}{2}\right) +\frac{1}{2}\right]= \frac{1}{3}\left(\tilde \mu(S) -\frac{1}{2}\right) +\frac{1}{2}
$$
which is equivalent to (\ref{subtilda}). 

To prove the formula (\ref{enough}) consider a subcomplex $R\subset S'$. Each 2-simplex $\sigma$ of $S$ determines three 2-simplexes of $S'$ which we denote by 
$\sigma_1, \sigma_2, \sigma_3$. We want to show that we may replace $R$ by a subcomplex $R_1\subset S'$ such that $\mu(R_1)\le \mu(R)$ and 
either $R_1$ contains all simplexes $\sigma_1, \sigma_2, \sigma_3$ or it contains none of them. 

Suppose that $R$ contains $\sigma_1$ and $\sigma_2$ but does not contain $\sigma_3$. Then $R_1=R\cup\sigma_3$ has the same number of vertices and greater number of faces, i.e. 
$\mu(R_1)<\mu(R)$. 

Suppose now that $R$ contains only one simplex among the $\sigma_i$'s; assume, that, say, $\sigma_1\subset R$ and $\sigma_2\not\subset R$ and 
$\sigma_3\not\subset R$. (A) If $\mu(R) \ge 1/2$, define $R_1$ by $R_1=R\cup \sigma_2\cup \sigma_3$. Then $\mu(R_1) \le \mu(R)$. 
(B) If $\mu(R)\le 1$ define $R_1$ as $R$ with $\sigma_1$ removed; then $\mu(R_1) \le \mu(R)$. Clearly at least one of the cases (A) or (B) holds and we proceed by induction, repeating this procedure with respect to all 2-simplexes $\sigma\subset S$. Thus we see that the minimum in 
$$\tilde \mu(S')= \min_{R\subset S'} \mu(R)$$ is achieved on subcomplexes $R\subset S'$ which have the form $R=T'$ for some $T\subset S$.  
This completes the proof of (\ref{subtilda}).

For $r=0,1, 2, \dots$ denote by $S^{r}$ the simplicial 2-complex which is obtained from $S$ by $r$ consecutive subdivisions as above. Then from (\ref{subtilda}) we obtain
\begin{eqnarray}\label{subtildar}
\tilde \mu(S^r)-\frac{1}{2} = \frac{1}{3^r}\left(\tilde \mu(S) -\frac{1}{2}\right).
\end{eqnarray}
We see that this sequence approaches $1/2$ as $r\to \infty$. It follows that, given $\epsilon >0$, for all sufficiently large $r$ we have 
$$\tilde \mu(S^r) \ge 1/2 -\epsilon.$$
Thus, the assumption $p\gg n^{-1/2+\epsilon}$ implies $p\gg n^{-\tilde\mu(S^r)}$ and now we may apply Theorem \ref{embed} to conclude that the $r$-th subdivision $S^r$ simplicially embeds into $Y$, a.a.s.
Hence we see that $S$ topologically embeds into $Y$, a.a.s.
\end{proof}
\begin{remark}{\rm 
The result of Theorem \ref{thm2intro} cannot be improved (without adding extra hypothesis) despite a special type of subdivision used in the proof. Indeed, one sees from formulae (\ref{orient}) and (\ref{nonorient}) and Theorem \ref{bal} that for a closed orientable surface $\Sigma_g$ of genus $g\ge 1$ one has $\tilde\mu(\Sigma_g)\to 1/2$ as the number of 2-simplexes $f$ goes to infinity. A similar conclusion is valid for nonorientable surfaces $N_g$ with $g\ge 2$. 
}
\end{remark}
\begin{remark}{\rm Consider the following invariant $\sign(X)\in \{+1, -1, 0\}$ of a simplicial 2-complex:
$$\sign(X) = \sign\left(\tilde\mu(X)-\frac{1}{2}\right).$$
Formula (\ref{sub}) seems to suggest that it is topologically invariant. However in (\ref{sub}) we used a special type of subdivisions. The following example shows that {\it in general 
$\sign(X)$ is not topologically invariant.} Consider the 2-complex $X$ shown in Figure \ref{triod} (left) which is the union of three triangles having a common edge. 
Let $Y_k$ be obtained by adding $k$ new vertices along the common edge and connecting them to the remaining vertices, see Figure \ref{triod} (right). 
\begin{figure}[h]
\begin{center}
\resizebox{10cm}{4cm}{\includegraphics[17,348][551,579]{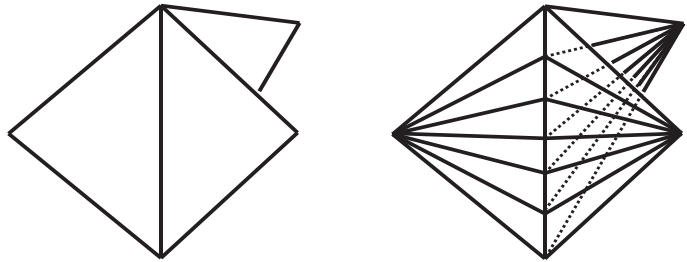}}
\end{center}
\caption{Complex $X$ (left) and its subdivision $Y_k$ (right).}\label{triod}
\end{figure}
One has $\tilde \mu(X)= 5/3$ and therefore $\sign(X)=+1$. However 
$$\tilde \mu(Y_k) \le \mu(Y_k) = \frac{k+5}{3k+3}.$$ Thus, for $k>7$, one has $\tilde\mu(Y_k)<1/2$ and $\sign(Y_k)=-1$. 
}
\end{remark}

\newcommand{\arxiv}[1]{{\texttt{\href{http://arxiv.org/abs/#1}{{arXiv:#1}}}}}

\newcommand{\MRh}[1]{\href{http://www.ams.org/mathscinet-getitem?mr=#1}{MR#1}}

\vskip 0.5cm

 Armindo Costa

Department of Mathematical Sciences

Durham University

Durham, DH1 3LE, UK

a.e.costa@durham.ac.uk

\vskip 0.5cm

Michael Farber

Department of Mathematical Sciences

Durham University

Durham, DH1 3LE, UK

Michael.farber@durham.ac.uk

http://maths.dur.ac.uk/$\sim$dma0mf/

\vskip 0.5cm

Thomas Kappeler

Mathematical Insitutte 

University of Zurich

Winterthurerstrasse 190, CH-8057 

Zurich, Switzerland

thomas.kappeler@math.uzh.ch




\begin{thebibliography}{GKM00}

\bibitem[AS00]{AS} N.\ Alon, J.\ Spencer, {\it The Probabilistic Method}, Third edition, 
Wiley-Intersci. Ser. Discrete Math. Optim., John Wiley \& Sons, Inc., Hoboken, NJ, 2008.

\bibitem[BHK08]{BHK} E.\ Babson, C.\ Hoffman, M.\ Kahle, 
{\it The fundamental group of random $2$-complexes}, preprint 2008. 
arxiv{0711.2704}

\bibitem[Bol08]{B} B. Bollob\'{a}s, \textit{Random Graphs}, Second edition, Cambridge University Press, 2008.
Cambridge Stud. Adv. Math., 73, Cambridge, 2001.

\bibitem[CFK10]{CFK} D. Cohen, M. Farber, T. Kappeler, \textit{The homotopical dimension of random 2-complexes}, preprint 2010, arXiv:1005.3383v1.

\bibitem[HR91]{HR} N. Hartsfield and G. Ringel, {Clean tirangulations}, Combinatorica, {\bf 11} (1991), 145 - 155. 

\bibitem[ER60]{ER} P.\ Erd\H{o}s, A.\ R\'enyi, {On the evolution of 
random graphs}, Publ.\ Math.\ Inst.\ Hungar.\ Acad.\ Sci.\ {\bf 5}
(1960), 17--61.


\bibitem[Far08]{F1} M. Farber, 
\emph{Topology of random linkages, }
Algebraic and Geometric Topology, {\bf 8}(2008), 155 - 171.


\bibitem [FK]{FK} M. Farber and T. Kappeler,
\textit{Betti numbers of random manifolds}, 
Homology, Homotopy and Applications, {\bf 10} (2008), No. 1, pp. 205 - 222.


\bibitem[HMS93]{Hog} C. Hog-Angeloni, W. Metzler, A. Sieradski, \textit{Two-dimensional homotopy and combinatorial group theory}, London Math. Soc. Lecture Note Ser., 197, Cambridge University Press, Cambridge, 1993.

\bibitem[J{\L}R00]{JLR} S. Janson, T. {\L}uczak, A. Ruci\'nski, \textit{Random graphs}, Wiley-Intersci. Ser. Discrete Math. Optim., Wiley-Interscience, New York, 2000.

\bibitem[JR80]{JR} M. Jungerman, and G. Ringel, \textit{Minimum triangulations on orientable surfaces}, Acta Math. {\bf 145}(1980), 121 - 154.

\bibitem[Koz09]{Ko} D. Kozlov, \textit{The threshold function for vanishing of the~top homology group of random $d$-complexes},
preprint 2009.  

\bibitem[LM06]{LM} N.\ Linial, R.\ Meshulam, {Homological connectivity
  of random $2$-complexes}, Combinatorica {\bf 26} (2006),  475--487.

\bibitem[MW09]{MW} R.\ Meshulam, N.\ Wallach, {Homological
  connectivity of random $k$-complexes}, Random Structures \& Algorithms 
  \textbf{34} (2009), 408--417. 






\bibitem[FS06]{FS} M. Farber and D. Schuetz, \textit{Novikov-Betti numbers and the fundamental group}, Russian Mathematical Surveys, {\bf 61}(2006), 1173 - 1175;
see also arXiv:math/0609004 

\bibitem[Hea90]{Hea} P.J. Heawood, \textit{Map-colour theorem}, Quart. J. Pure Appl. Math. {\bf{24}}(1890), 332-338.

\bibitem[Hat02] {Hat} A. Hatcher, \textit{Algebraic Topology}, Cambridge University Press, 2002.


\bibitem[Koz09]{DK} D. Kozlov, \textit{The threshold function for vanishing of the top homology group of random d-complexes}, arXiv:0904.1652. 

\bibitem[Rin55]{Rin} G. Ringel, \textit{Wie man die geschlossenen nichtorientierbaren Fl\"achen in m\"oglichst wenig Dreiecke zerlegen kann}, Math. Ann. {\bf 130}(1955), 317-326. 


\bibitem[Sh96]{Sh} A.N. Shiryaev, \textit{Probability}, second edition, 1996. 

\end{thebibliography}
\end{document}